\newcommand{\keywordsname}{Key words}
\newcommand{\keywords}[1]{%
\begin{@bstr@ctlist}
\hspace*{\abstitleskip}{\abstractnamefont\keywordsname\@bslabeldelim}\abstracttextfont\
#1%
\par\end{@bstr@ctlist}
}
\newcommand{\subjclassname}{Mathematics subject classification}
\newcommand{\subjclass}[2][2010]{%
\begin{@bstr@ctlist}
\hspace*{\abstitleskip}{\abstractnamefont\subjclassname\ (#1)\@bslabeldelim}\abstracttextfont\
#2%
\par\end{@bstr@ctlist}
}
\def\and{
	\end{tabular}%
	and%
	\begin{tabular}[t]{c}}%
\def\thanks#1{
\protected@xdef\@thanks{\@thanks
\protect\footnotetext[\the\c@footnote]{#1}}%
}
\let\addresses\@empty      
\newcommand{\address}[2][]{\g@addto@macro\addresses{\address{#1}{#2}}}
\newcommand{\curraddr}[2][]{\g@addto@macro\addresses{\curraddr{#1}{#2}}}
\newcommand{\email}[2][]{\g@addto@macro\addresses{\email{#1}{#2}}}
\newcommand{\urladdr}[2][]{\g@addto@macro\addresses{\urladdr{#1}{#2}}}
\def\enddoc@text{
  \ifx\@empty\addresses \else\@setaddresses\fi}
\def\emailaddrname{E-mail address}
\def\@setaddresses{\par
  \nobreak \begingroup
%
%
  \interlinepenalty\@M
  \def\address##1##2{\begingroup%
    \par\addvspace\bigskipamount
    \@ifnotempty{##1}{(\ignorespaces##1\unskip) }%
    {\noindent\ignorespaces##2}\par\endgroup}%
%
%
  \def\email##1##2{\begingroup
    \@ifnotempty{##2}{\nobreak\noindent\emailaddrname
      \@ifnotempty{##1}{, \ignorespaces##1\unskip}\/:\space
      \ttfamily##2\par}\endgroup}%
%
%
  \addresses
  \endgroup
}
\def\cstar#1{\expandafter\@cstar\csname c@#1\endcsname}
\def\@cstar#1{\ifcase#1\or $\ast$\or $\ast\ast$\or $\ast\ast\ast$\fi}
\AddEnumerateCounter{\cstar}{\@cstar}{$\ast\ast\ast$}
\newlist{conditions}{enumerate}{1}
\newlist{Aconditions}{enumerate}{1}
\newlist{aconditions}{enumerate}{1}
\setlist[Aconditions]{label=\normalfont(A\textsubscript{\arabic*}),ref=\text{\normalfont(A\textsubscript{\arabic*})}}
\setlist[aconditions]{label=\normalfont(a\textsubscript{\arabic*}),ref=\text{\normalfont(a\textsubscript{\arabic*})}}
\setlist[conditions]{label=\normalfont(\alph*),ref=\normalfont\alph*}
\newcommand{\SC}{\mathcal{S}}
\newcommand{\Z}{\mathbb{Z}}
\newcommand{\R}{\mathbb{R}}
\newcommand{\C}{\mathcal{C}}
\newcommand{\RC}{\mathcal{R}}
\newcommand{\F}{\mathbb{F}}
\newcommand{\HB}{\mathbb{H}}
\newcommand{\CB}{\mathbb{C}}
\newcommand{\SB}{\mathbb{S}}
\newcommand{\K}{\mathbb{K}}
\newcommand{\G}{\mathbb{G}}
\newcommand{\cupproduct}{\mathbin{\smile}}
\newcommand{\Hsph}{H_{\mathrm{sph}}}
\newcommand{\Mod}{\func{mod}}
\newcommand{\NF}{\mathfrak{N}}
\newtheorem{conjecture}{Conjecture}
\newtheorem*{apprcon}{Approximation Conjecture}
\newtheorem{conjectureBp}{Conjecture}
\newtheorem{theorem}{Theorem}[section]
\newtheorem{proposition}[theorem]{Proposition}
\newtheorem{lemma}[theorem]{Lemma}
\theoremstyle{definition}
\newtheorem{example}[theorem]{Example}
\newtheorem{remark}[theorem]{Remark}
\DeclarePairedDelimiter\abs{\lvert}{\rvert}%
\DeclarePairedDelimiter\norm{\lVert}{\rVert}%
\let\oldabs\abs
\def\abs{\@ifstar{\oldabs}{\oldabs*}}
\let\oldnorm\norm
\def\norm{\@ifstar{\oldnorm}{\oldnorm*}}
\mathchardef\mhyphen="2D
\title{\bf Some conjectures on continuous rational maps into spheres}
\date{}
\author{Wojciech Kucharz\thanks{The first author was partially supported
by the National Science Centre (Poland) under grant number
2014/15/B/ST1/00046. He also acknowledges with gratitude support and
hospitality of the Max--Planck--Institut f\"ur Mathematik in Bonn.}%
\and Krzysztof Kurdyka\thanks{The second author was
partially supported by ANR project STAAVF (France).}}
\address{Wojciech Kucharz\\Institute of Mathematics\\Faculty of Mathematics and Computer
Science\\Jagiellonian University\\\L{}ojasiewicza 6\\30-348
Krak\'ow\\Poland}
\email{Wojciech.Kucharz@im.uj.edu.pl}
\address{Krzysztof Kurdyka\\ Laboratoire de Math\'ematiques\\ UMR 5175
de CNRS\\ Universit\'e de Savoie\\ Campus Scientifique\\ 73 376 Le
Bourget-du-Lac Cedex\\ France}
\email{kurdyka@univ-savoie.fr}
\begin{document}
\maketitle
\thispagestyle{empty}

\begin{abstract}
Recently continuous rational maps between real algebraic varieties have
attracted the attention of several researchers. In this paper we continue
the investigation of approximation properties of continuous rational
maps with values in spheres. We propose a conjecture concerning such
maps and show that it follows from certain classical conjectures
involving transformation of compact smooth submanifolds of nonsingular
real algebraic varieties onto subvarieties. Furthermore, we prove our
conjecture in a special case and obtain several related results.
\end{abstract}

\keywords{Real algebraic variety, regular map, continuous rational map,
approximation, homotopy, homology.}

\subjclass{14P05, 14P25, 26C15.}

\section{Introduction and main results}\label{sec-1}

Recently several authors devoted their papers to the investigation of
continuous rational maps between real algebraic varieties, cf.
\cite{bib3, bib7, bibstar, bib12, bib14, bib15, bib16, bib17, bib18, bib19}.
Continuing this line of research, we propose Conjecture~\ref{con-a},
whose proof would completely clarify many problems concerning
homotopical and approximation properties of continuous rational maps
with values in unit spheres. We prove this conjecture in a special case
and also obtain some related results. Furthermore, we show that
Conjecture~\ref{con-a} is a consequence of another conjecture, which has
nothing to do with continuous rational maps and originates from the
celebrated paper of Nash \cite{bib22} and the subsequent developments
due to Tognoli \cite{bib24}, Akbulut and King \cite{bib1}, and other
mathematicians. All results announced in this section are proved in
Section~\ref{sec-2}.

Throughout the present paper we use the term \emph{real algebraic
variety} to mean a locally ringed space isomorphic to an algebraic
subset of $\R^n$, for some $n$, endowed with the Zariski topology and
the sheaf of real-valued regular functions (such an object is called an
affine real algebraic variety in \cite{bib4}). The class of real
algebraic varieties is identical with the class of quasiprojective real
varieties, cf. \cite[Proposition~3.2.10, Theorem~3.4.4]{bib4}.
Nonsingular varieties are assumed to be of pure dimension. Morphisms of
real algebraic varieties are called \emph{regular maps}. Each real
algebraic variety carries also the Euclidean topology, which is induced
by the usual metric on $\R$. Unless explicitly stated otherwise, all
topological notions relating to real algebraic varieties refer to the
Euclidean topology.

Let $X$ and $Y$ be real algebraic varieties. A map $f \colon X \to Y$ is
said to be \emph{continuous rational} if it is continuous on $X$ and
there exists a Zariski open and dense subvariety $U$ of $X$ such that
the restriction $f|_U \colon U \to Y$ is a regular map. Let $X(f)$
denote the union of all such $U$. The complement $P(f) = X \setminus
X(f)$ of $X(f)$ is the smallest Zariski closed subvariety of~$X$ for
which the restriction $f|_{X \setminus P(f)} \colon X \setminus P(f) \to
Y$ is a regular map. If $f(P(f)) \neq Y$, we say that $f$ is a
\emph{nice} map. There exist continuous rational maps that are not nice,
cf. \cite[Example~2.2~(ii)]{bib14}. Continuous rational maps have only
recently become the object of serious investigation, cf. \cite{bib3,
bib7, bibstar, bib12, bib14, bib15, bib16, bib17, bib18, bib19}. They form a
natural intermediate class between regular and continuous maps. Having
many desirable features of regular maps they are more flexible.

The space $\C(X,Y)$ of all continuous maps from $X$ into $Y$ will always
be endowed with the compact-open topology. There are the following
inclusions
\begin{equation*}
\C(X,Y) \supseteq \RC^0(X,Y) \supseteq \RC_0(X,Y) \supseteq \RC(X,Y),
\end{equation*}
where $\RC^0(X,Y)$ is the set of all continuous rational maps,
$\RC_0(X,Y)$ consists of the nice maps in $\RC^0(X,Y)$, and $\RC(X,Y)$ is
the set of regular maps. By definition, a continuous map from $X$ into
$Y$ can be approximated by continuous rational maps if it belongs to the
closure of $\RC^0(X,Y)$ in $\C(X,Y)$. Approximation by nice continuous
rational maps or regular maps is defined in the analogous way.

Henceforth we assume that the variety $X$ is compact and nonsingular,
and concentrate our attention on maps with values in the unit
$p$-sphere
\begin{equation*}
\SB^p = \{ (u_0, \ldots, u_p) \in \R^{p+1} \mid u_0^2 + \cdots + u_p^2 = 1
\}
\end{equation*}
for $p \geq 1$. Regular maps from $X$ into $\SB^p$ have been extensively
studied, cf. \cite{bib4} and the literature cited there. Here we only
recall that the closure of $\RC(X, \SB^p)$ in $\C(X, \SB^p)$ can be a
much smaller set than the closure of $\RC_0(X,\SB^p)$, cf.
\cite[Example~1.8]{bib16}. If $\dim X \leq p$, then the set $\RC_0(X,
\SB^p)$ is dense in $\C(X, \SB^p)$. This assertion holds for $\dim X <p$
since $\R^p$ is biregularly isomorphic to~$\SB^p$ with one point
removed, whereas for $\dim X = p$ it is proved in \cite{bib16}.
Furthermore, $\RC_0(\SB^n, \SB^p)$ is dense in $\C(\SB^n, \SB^p)$ for all
positive integers $n$ and $p$, cf. \cite{bib16}. However, if $\dim X > p$,
then it can happen that a continuous map from $X$ into $\SB^p$ is not
homotopic to any continuous rational map, and hence $\RC^0(X, \SB^p)$ is
not dense in $\C(X, \SB^p)$, cf. \cite[Theorem~2.8]{bib16}. There are
reasons to believe that homotopical and approximation properties of nice
continuous rational maps from~$X$ into $\SB^p$, investigated in
\cite{bib14} and \cite{bib16}, are actually equivalent and fully
determined by certain (co)homological conditions. We give a precise
formulation of the last statement in Conjecture~\ref{con-a}.

Some preparation is required. Let $M$ be a compact smooth (of class
$\C^{\infty}$) codimension $p$ submanifold of $X$. If the normal bundle
to $M$ in $X$ is oriented, we denote by $\tau_M^X$ the Thom class of
$M$ in the cohomology group $H^p(X, X\setminus M; \Z)$, cf.
\cite[p.~118]{bib21}. The image of $\tau_M^X$ by the restriction
homomorphism $H^p(X, X \setminus M; \Z) \to H^p(X; \Z)$, induced by the
inclusion map $X \hookrightarrow (X, X \setminus M)$, will be denoted by
$\llbracket M \rrbracket^X$ and called the cohomology class represented
by $M$. If $X$ is oriented as a smooth manifold, then $\llbracket M
\rrbracket^X$ is up to sign Poincar\'e dual to the homology class in
$H_*(X; \Z)$ represented by $M$, cf. \cite[p.~136]{bib21}. Similarly,
without any orientability assumption, we define the cohomology class
$[M]^X$ in $H^p(X; \Z/2)$ represented by $M$. The cohomology class
$[M]^X$ is Poincar\'e dual to the homology class in $H_*(X; \Z/2)$
represented by $M$. Furthermore, if the normal bundle to $M$ in $X$ is
oriented, then
\begin{equation*}
\rho(\llbracket M \rrbracket^X) = [M]^X,
\end{equation*}
where
\begin{equation*}
\rho \colon H^*(X; \Z) \to H^*(X; \Z/2)
\end{equation*}
is the reduction $\Mod 2$ homomorphism.

We say that a cohomology class $u \in H^p(X; \Z)$ (resp. $v \in H^p(X;
\Z/2)$) is \emph{adapted} if there exists a nonsingular codimension $p$
Zariski locally closed subvariety $Z$ of $X$ such that $Z$ is a~compact
smooth submanifold with trivial normal bundle and
\begin{equation*}
u = \llbracket Z \rrbracket^X \quad\textrm{(resp. $v=[Z]^X$)},
\end{equation*}
where the first equality holds when the normal bundle to $Z$ is suitably
oriented. Here $Z$ need not be Zariski closed in $X$, but the
nonsingular locus of its Zariski closure coincides with $Z$. Denote by
$A^p(X; \Z)$ (resp. $A^p(X; \Z/2)$) the subgroup of $H^p(X; \Z)$ (resp.
$H^p(X; \Z/2)$) generated by all adapted cohomology classes. By
construction,
\begin{equation*}
\rho(A^p(X; \Z)) = A^p(X; \Z/2).
\end{equation*}

The groups $A^p(-;\Z)$ and $A^p(-;\Z/2)$ can be explicitly computed for
some real algebraic varieties.

\begin{example}\label{ex-1-1}
Let $X = X_1 \times \cdots \times X_r$, where $X_i$ is a nonsingular
real algebraic variety diffeomorphic to the $n_i$-sphere for $1 \leq i
\leq r$. Then, by the K\"unneth formula,
\begin{equation*}
A^p(X; \Z) = H^p(X;\Z) \quad \textrm{and}\quad A^p(X; \Z/2) = H^p(X;
\Z/2)
\end{equation*}
for every $p \geq 0$.
\end{example}

Let $s_p$ (resp. $\bar{s}_p$) be a generator of the cohomology group
$H^p(\SB^p; \Z) \cong \Z$ (resp.\linebreak $H^p(\SB^p; \Z/2) \cong \Z/2)$, $p \geq
1$. Recall that a cohomology class $u \in H^p(X; \Z)$ (resp. ${v \in
H^p(X; \Z/2)}$) is said to be \emph{spherical} if $u = f^*(s_p)$ (resp.
$v = f^*(\bar{s}_p)$) for some continuous map $f \colon X \to \SB^p$.
Without loss of generality, the map $f$ can be assumed to be smooth. In
that case, if $y \in \SB^p$ is a~regular value of $f$, then the inverse
image $f^{-1}(y)$ is a compact smooth codimension $p$ submanifold of
$X$, embedded with trivial normal bundle, for which
\begin{equation*}
f^*(s_p) = \llbracket f^{-1}(y) \rrbracket^X \quad\textrm{and}\quad
f^*(\bar{s}_p) = [f^{-1}(y)]^X,
\end{equation*}
where the first equality holds provided that the normal bundle to
$f^{-1}(y)$ in $X$ is suitably oriented (this is a well known fact whose
proof is recalled in \cite[p.~258]{bib17}). Conversely, if $M$ is
a~compact smooth codimension $p$ submanifold of $X$ with normal bundle
trivial and oriented, then the cohomology classes $\llbracket M
\rrbracket^X \in H^p(X; \Z)$ and $[M]^X \in H^p(X; \Z/2)$ are spherical
(this is a consequence of a classical result in framed cobordism, cf.
\cite[p.~44]{bib20}). We denote by $\Hsph^p(X; \Z)$ (resp. $\Hsph^p (X;
\Z/2)$) the subgroup of $H^p(X;\Z)$ (resp. $H^p(X; \Z/2)$) generated by
all spherical cohomology classes. As explained above,
\begin{equation*}
A^p(X; \Z) \subseteq \Hsph^p(X; \Z) \quad\textrm{and}\quad A^p(X; \Z/2)
\subseteq \Hsph^p(X; \Z/2).
\end{equation*}

\begin{conjecture}\label{con-a}
Let $X$ be a compact nonsingular real algebraic variety and let $p$ be a
positive integer. For a continuous map $f \colon X \to \SB^p$, the
following conditions are equivalent:
\begin{Aconditions}
\item\label{A1} $f$ can be approximated by nice continuous rational
maps.

\item\label{A2} $f$ is homotopic to a nice continuous rational map.

\item\label{A3} The cohomology class $f^*(s_p) \in H^p(X; \Z)$ is
adapted.

\item\label{A4} The cohomology class $f^*(\bar{s}_p) \in H^p(X; \Z/2)$
is adapted.

\item\label{A5} $f^*(s_p) \in A^p(X; \Z)$.

\item\label{A6} $f^*(\bar{s}_p) \in A^p(X; \Z/2)$.
\end{Aconditions}
\end{conjecture}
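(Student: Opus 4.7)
The plan is to organise the six conditions into a cycle of implications, collecting the many formal steps first and isolating the genuinely algebraic content in one or two hard arrows. The implications I would dispose of immediately are $\mathrm{(A1)}\Rightarrow\mathrm{(A2)}$ (pointwise approximants eventually land in the open set of maps homotopic to $f$), $\mathrm{(A3)}\Rightarrow\mathrm{(A5)}$ and $\mathrm{(A4)}\Rightarrow\mathrm{(A6)}$ (the very definition of $A^p$), $\mathrm{(A3)}\Rightarrow\mathrm{(A4)}$ (from the identity $\rho(\llbracket Z\rrbracket^X)=[Z]^X$ recorded in the excerpt when the normal bundle is oriented), and $\mathrm{(A5)}\Rightarrow\mathrm{(A6)}$ (from $\rho(A^p(X;\Z))=A^p(X;\Z/2)$).

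For $\mathrm{(A2)}\Rightarrow\mathrm{(A3)}$, let $g$ be a nice continuous rational map homotopic to $f$. Niceness gives a nonempty open set $\SB^p\setminus g(P(g))$, and applying Sard's theorem to the smooth restriction of $g$ to $X(g)$ I would choose a point $y$ in that set which is also a regular value of $g|_{X(g)}$. Then $Z=g^{-1}(y)$ lies entirely in the Zariski open set $X(g)$, so it is a Zariski locally closed nonsingular codimension $p$ subvariety of $X$ which is compact and smooth with trivialised normal bundle; the Pontryagin--Thom identity recalled in the excerpt then gives $f^*(s_p)=g^*(s_p)=\llbracket Z\rrbracket^X$. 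The one point requiring care is that the nonsingular locus of the Zariski closure of $Z$ should coincide with $Z$, which I would arrange by choosing $y$ outside the lower-dimensional critical image of the algebraic map $g|_{X(g)}$ as well.

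The main obstacle is closing the loop at $\mathrm{(A5)}\Rightarrow\mathrm{(A3)}$ together with $\mathrm{(A3)}\Rightarrow\mathrm{(A2)}$ (and their $\Z/2$-analogues). The first asks that a $\Z$-linear combination $\sum n_i\llbracket Z_i\rrbracket^X$ be represented by a single adapted $Z$; at the level of smooth framed submanifolds this is handled by disjoint unions and framed bordisms, and algebraically one has to realise such bordisms by Zariski locally closed subvarieties of $X$, which is precisely the transformation problem that the excerpt promises to reduce to the Nash--Tognoli-type conjectures. Once a single $Z$ is in hand, the next step is to build a nice continuous rational map $X\to\SB^p$ homotopic to $f$ by performing a Pontryagin--Thom collapse algebraically along the trivialised normal bundle of $Z$; this is the core new input, and here the flexibility of continuous rational maps over genuinely regular maps is essential. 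Finally the upgrade from $\mathrm{(A2)}$ to $\mathrm{(A1)}$ should follow by a standard perturbation of the constructed map inside its homotopy class using regular self-maps of $\SB^p$ close to the identity, so the real difficulty concentrates on the algebraic realisation of framed bordism and of the collapse map.
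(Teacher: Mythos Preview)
The statement you are attempting to prove is a \emph{conjecture}: the paper does not give an unconditional proof. What the paper establishes unconditionally is exactly the block of ``easy'' implications you list, together with $\mathrm{(A2)}\Rightarrow\mathrm{(A3)}$ (cited from \cite{bib17}); your sketches of these are correct and in line with the paper. The remaining direction --- closing the loop back to $\mathrm{(A1)}$ --- is left open; the paper only proves it \emph{conditionally} on Conjecture~B($p$) (Proposition~1.4).

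Where your outline diverges from the paper is in the strategy for this hard direction. You propose to prove $\mathrm{(A5)}\Rightarrow\mathrm{(A3)}$ (merge a linear combination $\sum n_i\llbracket Z_i\rrbracket^X$ into a single adapted $Z$), then $\mathrm{(A3)}\Rightarrow\mathrm{(A2)}$ via an algebraic Pontryagin--Thom collapse, then upgrade $\mathrm{(A2)}$ to $\mathrm{(A1)}$ by perturbation. The paper bypasses all three steps and goes directly $\mathrm{(A6)}\Rightarrow\mathrm{(A1)}$. Smooth $f$, pick a regular value $y$, set $M=f^{-1}(y)$; from $[M]^X=f^*(\bar s_p)\in A^p(X;\Z/2)$ a Stiefel--Whitney number computation (Lemma~2.2, using Conner's criterion and that the normal bundles are trivial) shows the unoriented bordism class of $M\hookrightarrow X$ is algebraic (Lemma~2.3). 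Conjecture~B($p$) then gives that $M$ itself is $\varepsilon$-isotopic to a nonsingular Zariski locally closed subvariety, and the approximation criterion of \cite{bib16} (Theorem~2.1 here) yields $\mathrm{(A1)}$ immediately. The point is that one never needs a single algebraic $Z$ representing the class; it is enough that the smooth fibre $M$ of $f$ be $\varepsilon$-isotopic to a subvariety, and for that only the algebraicity of its bordism class is required.

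Two of your proposed hard steps are therefore more than is needed, and the third is not how the paper proceeds: the upgrade $\mathrm{(A2)}\Rightarrow\mathrm{(A1)}$ by ``regular self-maps of $\SB^p$ close to the identity'' is not a technique that appears here (and in general is not known); the paper relies instead on the cited approximation criterion, which packages the algebraic Pontryagin--Thom construction and the approximation in one statement. Your plan is not wrong as a programme, but it isolates the difficulty at a stronger and less tractable place than the paper does.
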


The known implications between conditions \ref{A1} through \ref{A6} are
indicated as follows
\begin{diagram}[small]
\ref{A1} & \rImplies & \ref{A2} & \rImplies & \ref{A3} & \rImplies & \ref{A4}\\
& & & & \dImplies & &\dImplies \\
& & & & \ref{A5} & \rImplies & \ref{A6}
\end{diagram}
The implication \ref{A2} $\Rightarrow$ \ref{A3} is proved in
\cite[Proposition~1.1]{bib17}, while all the others are obvious. The maps
satisfying \ref{A1} (resp. \ref{A2}) are characterized in \cite{bib16}
(resp. \cite{bib14}). The results of \cite{bib14, bib16} are crucial in
the proof of
\begin{equation*}
\textrm{\ref{A2}} \Rightarrow \textrm{\ref{A1} for } \dim X + 3 \leq 2p,
\end{equation*}
given in \cite{bib18}.

\begin{remark}\label{rem-1-2}
In order to prove Conjecture~\ref{con-a} it would suffice to show that
\ref{A6} implies \ref{A1}.
\end{remark}

We record the following observation.

\begin{remark}\label{rem-1-3}
Let $X$ be a compact nonsingular real algebraic variety and let $p$ be a
positive integer. If Conjecture~\ref{con-a} holds, then the following
conditions are equivalent:
\begin{conditions}
\item\label{1-3-a} The set $\RC_0(X, \SB^p)$ of nice continuous rational
maps
is dense in $\C(X, \SB^p)$.

\item\label{1-3-b} Each continuous map from $X$ into $\SB^p$ is
homotopic to a nice continuous rational map.

\item\label{1-3-c} $A^p(X; \Z) = \Hsph^p (X; \Z)$.

\item\label{1-3-d} $A^p(X; \Z/2) = \Hsph^p (X; \Z/2)$.
\end{conditions}
\end{remark}

As already mentioned at the beginning of this section,
Conjecture~\ref{con-a} is related to seemingly quite different problems
investigated in \cite{bib1, bib22, bib24}. We now explain this in
detail.

Let $V$ be a real algebraic variety. A bordism class in the $n$th
unoriented bordism group $\NF_n(V)$ of $V$ is said to be
\emph{algebraic} if it can be represented by a regular map from an
$n$-dimensional compact nonsingular real algebraic variety into $V$, cf.
\cite{bib1, bib2}.

\begin{apprcon}
For any nonsingular real algebraic variety $V$, the following condition
is satisfied: If $M$ is a compact smooth submanifold of $V$ and the
unoriented bordism class of the inclusion map $M \hookrightarrow X$ is
algebraic, then $M$ is $\varepsilon$-isotopic to a nonsingular Zariski
locally closed subvariety of $V$.
\end{apprcon}

Here ``$\varepsilon$-isotopic'' means isotopic via a smooth isotopy that
can be chosen arbitrarily close, in the $\C^{\infty}$ topology, to the
inclusion map $M \hookrightarrow V$.  A slightly weaker assertion than the
one in the Approximation Conjecture is known to be true: If the
unoriented bordism class of the inclusion map $M \hookrightarrow V$ is
algebraic, then the smooth submanifold $M \times \{ 0 \}$ of $V \times
\R$ is $\varepsilon$-isotopic to a nonsingular Zariski locally closed
subvariety of $V \times \R$, cf. \cite[Theorem~F]{bib1}.

The following is a special case of the Approximation Conjecture.

\stepcounter{conjecture}
\begin{conjectureBp}\label{con-bp}
For any compact nonsingular real algebraic variety $V$, the following
condition is satisfied: If $M$ is a compact smooth codimension $p$
submanifold of $V$, embedded with trivial normal bundle, and the
unoriented bordism class of the inclusion map is algebraic, then $M$ is
$\varepsilon$-isotopic to a nonsingular Zariski locally closed subvariety
of $V$.
\end{conjectureBp}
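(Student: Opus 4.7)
The plan is to bootstrap from Theorem~F of \cite{bib1}, using the trivial normal bundle hypothesis to eliminate the auxiliary $\R$-factor that appears in the Akbulut--King construction. Since the unoriented bordism class of $M \times \{0\} \hookrightarrow V \times \R$ is that of $M \hookrightarrow V$ post-composed with the regular zero section, it is algebraic, so applying Theorem~F produces a nonsingular Zariski locally closed subvariety $W \subset V \times \R$ of codimension $p$ which is $\varepsilon$-isotopic to $M \times \{0\}$. For $\varepsilon$ small enough, the projection $\pi \colon V \times \R \to V$ restricts to a smooth embedding on $W$, and its image $M' = \pi(W)$ is a smooth codimension $p$ submanifold of $V$ which is $\varepsilon$-isotopic to $M$.

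The principal obstacle is that $\pi|_W \colon W \to V$, although an injective regular morphism between nonsingular real algebraic varieties, need not have Zariski locally closed image; in general the image of such a morphism is only guaranteed to be semi-algebraic. To overcome this, I would refine the application of Theorem~F by invoking the trivial normal bundle of $M$ in $V$, which supplies a diffeomorphism of a tubular neighborhood of $M$ with $M \times \R^p$, hence of a tubular neighborhood of $M \times \{0\}$ in $V \times \R$ with $M \times \R^{p+1}$. Within the algebraic isotopy class produced by Theorem~F, the goal would be to arrange $W$ to be the graph over some nonsingular Zariski locally closed subvariety $M' \subset V$ of a regular function $h \colon M' \to \R$; in that case $\pi(W) = M'$ is automatically nonsingular and Zariski locally closed in $V$, and the conjecture follows.

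Carrying out this graph-straightening step is where I expect the main technical difficulty to lie: it would require revisiting the construction in \cite{bib1} and showing that, under the triviality hypothesis on the normal bundle, the algebraic representative can be chosen so that its $\R$-coordinate depends regularly on the $V$-coordinate. A parallel strategy would begin with a smooth map $F \colon V \to \SB^p$ transverse to a point $y$ with $F^{-1}(y) = M$, available precisely because the normal bundle of $M$ is trivial, and seek a regular approximation of $F$ whose regular-value fibre provides the desired $M'$; however, the existence of such an approximation is essentially the sphere-valued rational approximation question that the paper is trying to resolve, so the refinement of Theorem~F appears to be the more self-contained route.
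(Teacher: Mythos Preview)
The statement you are trying to prove is labelled \emph{Conjecture}~B($p$) in the paper, and the paper does \emph{not} supply a proof of it. It is presented as an open special case of the Approximation Conjecture, and the paper's only use of it is conditional: Proposition~\ref{prop-1-4} shows that \emph{if} Conjecture~B($p$) holds then Conjecture~A follows. So there is no ``paper's own proof'' to compare against; what you have written is an attempted attack on an open problem.

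As an attack, your outline correctly isolates the known input (Theorem~F of \cite{bib1} gives a nonsingular Zariski locally closed $W \subset V \times \R$ that is $\varepsilon$-isotopic to $M \times \{0\}$) and the genuine obstruction (the projection $\pi|_W$ need not have Zariski locally closed image in $V$). But the step you call ``graph-straightening'' is not a lemma you have proved or cited; it is precisely the missing idea. Arranging the Akbulut--King representative $W$ to be the graph of a regular function over a nonsingular Zariski locally closed $M' \subset V$ would already furnish the $M'$ the conjecture asks for, so this reformulation does not reduce the difficulty---it restates it. Nothing in \cite{bib1} guarantees such a graph form, and ``revisiting the construction'' is a research programme, not a proof step. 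Your alternative route via a regular approximation of a Pontryagin--Thom map $F \colon V \to \SB^p$ is, as you yourself note, circular: that approximation property is exactly what Conjecture~A (and hence, via Proposition~\ref{prop-1-4}, Conjecture~B($p$)) is meant to yield.

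In short: there is no gap relative to the paper because the paper proves nothing here; your proposal is a reasonable heuristic but does not constitute a proof, and the place where it stalls is the heart of the open conjecture.
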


Presumably, Conjecture~\ref{con-bp} should be easier to prove than the
Approximation Conjecture. In the context of this paper,
Conjecture~\ref{con-bp} is of particular interest.

\begin{proposition}\label{prop-1-4}
If Conjecture~\ref{con-bp} holds, then so does Conjecture~\ref{con-a}.
\end{proposition}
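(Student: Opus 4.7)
By Remark~\ref{rem-1-2} it is enough to prove the implication \ref{A6}~$\Rightarrow$~\ref{A1} under the assumption of Conjecture~\ref{con-bp}. Set $n := \dim X$ and let $f\colon X \to \SB^p$ be a continuous map with $f^{*}(\bar{s}_{p}) \in A^p(X;\Z/2)$. After replacing $f$ by a $\C^{\infty}$-close smooth map we may assume $f$ is smooth, and by Sard's theorem we fix a regular value $y \in \SB^p$. Setting $M := f^{-1}(y)$, the Pontryagin--Thom construction presents $M$ as a compact smooth codimension-$p$ submanifold of $X$ whose normal bundle is canonically trivialized by $df$, and $f^{*}(\bar{s}_{p}) = [M]^{X}$.

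The hypothesis \ref{A6} lets us write $[M]^{X} = [Z_{1}]^{X} + \cdots + [Z_{k}]^{X}$ in $H^{p}(X;\Z/2)$, where each $Z_{j}$ is a nonsingular codimension-$p$ Zariski locally closed subvariety of $X$ with trivial normal bundle. By Poincar\'e duality this yields the homological identity $[M] = [Z_{1}] + \cdots + [Z_{k}]$ in $H_{n-p}(X;\Z/2)$. The crux is to promote this cohomological identity to an unoriented bordism identity, namely to show that the bordism class $[\iota_{M}] \in \NF_{n-p}(X)$ of the inclusion $\iota_{M}\colon M \hookrightarrow X$ is represented by a regular map from a compact nonsingular real algebraic variety into $X$. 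Since the inclusions $Z_{j} \hookrightarrow X$ are regular, the sum $\sum_{j}[Z_{j} \hookrightarrow X] \in \NF_{n-p}(X)$ is algebraic, and therefore the difference $[\iota_{M}] - \sum_{j}[Z_{j} \hookrightarrow X]$ lies in the kernel of the Thom homomorphism $\NF_{n-p}(X) \to H_{n-p}(X;\Z/2)$. Via the Conner--Floyd splitting $\NF_{n-p}(X) \cong \bigoplus_{k} H_{k}(X;\Z/2) \otimes_{\Z/2} \NF_{n-p-k}$, this kernel is generated by products $\alpha \otimes [B]$ with $\alpha \in H_{k}(X;\Z/2)$ ($k < n-p$) and $B$ a closed smooth manifold generating a summand of $\NF_{n-p-k}$; Nash--Tognoli realizes each such $B$ as a nonsingular real algebraic variety, and Akbulut--King style arguments from \cite{bib1} let one represent the ambient factors by regular maps into $X$. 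Combining, $[\iota_{M}]$ is algebraic.

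Since $M$ already has trivial normal bundle, Conjecture~\ref{con-bp} now applies and yields a $\C^{\infty}$-small ambient isotopy $\{h_{t}\}_{t \in [0,1]}$ of $X$ with $h_{0} = \mathrm{id}$ such that $M' := h_{1}(M)$ is a nonsingular Zariski locally closed subvariety of $X$. Composing $f$ with $h_{1}^{-1}$ produces a smooth map $f'$ close to $f$ with $(f')^{-1}(y) = M'$. A Pontryagin--Thom collapse on a tubular neighborhood of $M'$, together with a small polynomial perturbation of the framing induced by $df'$, yields via the approximation results from \cite{bib14, bib16} a nice continuous rational map $g\colon X \to \SB^{p}$ with $g^{-1}(y) = M'$ and $g$ arbitrarily close to $f$, establishing \ref{A1}. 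The main obstacle in this plan is the bordism lifting in the second paragraph: the hypothesis \ref{A6} controls only the image of $[\iota_{M}]$ in $H_{n-p}(X;\Z/2)$, whereas Conjecture~\ref{con-bp} demands algebraicity of the full bordism class in $\NF_{n-p}(X)$, and recognizing that the Conner--Floyd correction terms can be realized algebraically is the technical heart of the argument, resting on the Nash--Tognoli--Akbulut--King algebraization program of \cite{bib1, bib22, bib24}.
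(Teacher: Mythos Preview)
Your outline matches the paper's: reduce to \ref{A6}$\Rightarrow$\ref{A1}, take a regular fiber $M=f^{-1}(y)$, show that the unoriented bordism class of $M\hookrightarrow X$ is algebraic, invoke Conjecture~\ref{con-bp}, and then approximate. The last step is handled in the paper by a direct appeal to Theorem~\ref{th-2-1} (which is \cite[Theorem~1.2]{bib16}); your Pontryagin--Thom rewriting with a ``polynomial perturbation of the framing'' is not needed.

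The genuine gap is in your second paragraph, and you flag it yourself. You only establish that $[\iota_M]-\sum_j[Z_j\hookrightarrow X]$ lies in the kernel of the Thom homomorphism $\NF_{n-p}(X)\to H_{n-p}(X;\Z/2)$, and then assert that the Conner--Floyd correction terms $\alpha\otimes[B]$ can be realized algebraically via ``Akbulut--King style arguments.'' This is not proved, and there is no reason to expect it in general: realizing an arbitrary class $\alpha\in H_k(X;\Z/2)$ by a regular map from a compact nonsingular variety into $X$ is exactly the sort of statement the Approximation Conjecture concerns. As written, your bordism-lifting step is at best incomplete and at worst circular.

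The paper avoids this difficulty entirely, and the key point you are missing is that the trivial normal bundles of \emph{both} $M$ and each $Z_j$ force the bordism difference to be \emph{zero}, not merely to lie in the kernel of the Thom map. Form $N=M\sqcup Z_1\sqcup\cdots\sqcup Z_k$ with the obvious map $h\colon N\to X$; this is an immersion with trivial normal bundle and $h_*([N])=0$. Triviality of the normal bundle gives $w_i(N)=h^*(w_i(X))$ for all $i$, so every characteristic number
\[
\langle w_{i_1}(N)\cupproduct\cdots\cupproduct w_{i_r}(N)\cupproduct h^*(u),\,[N]\rangle
=\langle v,\,h_*([N])\rangle=0.
\]
By Conner's criterion \cite[(17.3)]{bib6} the bordism class of $h$ vanishes, hence $[\iota_M]=\sum_j[Z_j\hookrightarrow X]$ exactly, and this class is algebraic since each $Z_j\hookrightarrow X$ is a regular map. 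This is the content of Lemmas~\ref{lem-2-2} and~\ref{lem-2-3}, and it makes the Conner--Floyd decomposition and any further algebraization of correction terms unnecessary.
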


Using a method independent of Conjecture~\ref{con-bp}, we prove the
following special case of Conjecture~\ref{con-a}.

\begin{theorem}\label{th-1-5}
Let $X$ be a compact nonsingular real algebraic variety of dimension
$n$. Let $d$ and $p$ be positive integers satisfying $n+1 \leq p$ and
$n+d+2 \leq 2p$. Then for a continuous map $f \colon X \times \SB^d \to
\SB^p$, the following conditions are equivalent:
\begin{aconditions}
\item\label{a1} $f$ can be approximated by nice continuous rational
maps.

\item\label{a2} $f$ is homotopic to a nice continuous rational map.

\item\label{a3} The cohomology class $f^*(s_p) \in H^p(X \times \SB^d;
\Z)$ is adapted.

\item\label{a4} The cohomology class $f^*(\bar{s}_p) \in H^p(X \times
\SB^d; \Z/2)$ is adapted.

\item\label{a5} $f^*(s_p) \in A^p(X \times \SB^d; \Z)$.

\item\label{a6} $f^*(\bar{s}_p) \in A^p(X \times \SB^d; \Z/2)$.
\end{aconditions}
\end{theorem}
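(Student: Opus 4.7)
The plan is to close the cycle of implications shown in the diagram after Conjecture~\ref{con-a} by deriving \ref{a1} from \ref{a6}. The two dimension assumptions play complementary roles: $n+1\leq p$ reduces the cohomological data to $X$ alone via K\"unneth, while $n+d+2\leq 2p$ places us in the metastable range where homotopy classes of maps into $\SB^p$ are controlled by cohomology.

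First I would extract the cohomological content. Since $\dim X=n<p$, we have $H^p(X;\Z)=H^p(X;\Z/2)=0$, so the K\"unneth formula gives natural isomorphisms
\begin{equation*}
H^p(X\times\SB^d;\Z)\cong H^{p-d}(X;\Z),\qquad H^p(X\times\SB^d;\Z/2)\cong H^{p-d}(X;\Z/2),
\end{equation*}
identifying $\alpha\otimes s_d$ (resp.\ $\alpha\otimes\bar{s}_d$) with $\alpha$. For any nonsingular Zariski locally closed subvariety $W\subseteq X$ of codimension $p-d$ with trivial normal bundle and any Zariski-closed point $*\in\SB^d$, the product $W\times\{*\}$ is a nonsingular Zariski locally closed subvariety of $X\times\SB^d$ of codimension $p$ with trivial normal bundle, and its cohomology class under K\"unneth corresponds to $[W]^X$. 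A disjoint-union argument together with a small algebraic isotopy placing the Zariski closures in general position absorbs a finite sum of adapted classes into a single adapted witness. This shows that under the K\"unneth identification $A^{p-d}(X;\Z/2)$ is carried onto $A^p(X\times\SB^d;\Z/2)$ and that every element of the latter is itself adapted. Therefore \ref{a6} implies \ref{a4}, and similarly \ref{a5} implies \ref{a3}.

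Next I would realise the adapted witness by a nice continuous rational map. Given $Z=W\times\{*\}$ with $[Z]^{X\times\SB^d}=f^*(\bar{s}_p)$, I would perform an algebraic Pontryagin--Thom construction: take a semi-algebraic tubular neighbourhood of $Z$ trivialised by the algebraic framing and collapse its exterior to a fixed point (the antipode of a regular value $y_0\in\SB^p$). Because $Z$ is Zariski locally closed and the framing is algebraic, the resulting map $g\colon X\times\SB^d\to\SB^p$ is nice continuous rational, and $g^{-1}(y_0)=Z$ as framed submanifolds. The Pontryagin--Thom theorem in the metastable range $n+d+2\leq 2p$ then yields $g\simeq f$, establishing \ref{a2}.

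The main obstacle is the final passage \ref{a2}\,$\Rightarrow$\,\ref{a1}: the approximation theorem of \cite{bib18} requires $\dim(X\times\SB^d)+3\leq 2p$, one unit sharper than our hypothesis. I expect to bridge the gap by exploiting the product structure of the source. Restricting $g$ to the Zariski-open set $X\times U$ with $U=\SB^d\setminus\{*'\}$ for a point $*'\neq*$, one can approximate $f$ on $X\times U$ fibrewise over $X$ using the density results of \cite{bib14,bib16}, arranging the approximation to agree with $g$ near $X\times\{*\}$ so that it extends across $\{*'\}$ without change; the hypothesis $n+1\leq p$ kills the secondary cohomological obstructions supported on the $X$-factor alone, which is precisely what makes the one-unit sharper bound available here.
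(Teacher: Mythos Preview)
The architecture is wrong: you route through \ref{a2} and then struggle to reach \ref{a1}, but the paper never passes through \ref{a2}. The missing tool is the approximation criterion (Theorem~\ref{th-2-1}, from \cite{bib16}): if $f$ is smooth and some regular fibre $M=f^{-1}(y)$ is $\varepsilon$-isotopic to a nonsingular Zariski locally closed subvariety of $X\times\SB^d$, then \ref{a1} holds directly. The hypothesis $n+1\le p$ is not used for K\"unneth; it simply guarantees $f(X\times\{u\})\ne\SB^p$ for any $u\in\SB^d$, so $y$ can be chosen with $M\subset X\times(\SB^d\setminus\{u\})\cong X\times\R^d$. Condition \ref{a6} together with Lemma~\ref{lem-2-3} makes the unoriented bordism class of $M\hookrightarrow X\times\SB^d$ algebraic. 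The inequality $n+d+2\le 2p$ becomes $2\dim M+2\le n+d$, the Whitney embedding range: one isotopes $M$ inside $X\times\R^d$ to a submanifold of the form $\bar M\times\{0\}\subset(X\times\R^{d-1})\times\R$, and then Akbulut--King \cite[Theorem~F]{bib1} supplies the required $\varepsilon$-isotopy to a subvariety (this is Lemma~\ref{lem-2-5}).

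Several of your intermediate steps also fail on their own terms. The Pontryagin--Thom collapse is not continuous rational merely because $Z$ is a subvariety with trivial normal bundle; the cutoff destroys rationality, and producing a continuous rational representative with prescribed regular fibre is precisely the nontrivial content of \cite{bib14,bib16}, not something one may assume. Even granting such a $g$, the stable classification $[X\times\SB^d,\SB^p]\cong H^p(X\times\SB^d;\Z)$ requires matching the \emph{integral} class $f^*(s_p)$, whereas you have arranged only $[Z]^{X\times\SB^d}=f^*(\bar s_p)$ in $\Z/2$; \ref{a4} does not yield \ref{a3}. Your proposed fibrewise patch for \ref{a2}$\Rightarrow$\ref{a1} is too vague to assess, and in any case becomes unnecessary once Theorem~\ref{th-2-1} is in play.
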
\pagebreak

For any positive integers $n$, $d$ and $p$, we have
\begin{equation*}
A^p (\SB^n \times \SB^d; \Z/2) = H^p(\SB^n \times \SB^d; \Z/2)
\end{equation*}
and hence, if Conjecture~\ref{con-a} holds, then the set $\RC_0(\SB^n
\times \SB^d, \SB^p)$ of nice continuous rational maps is dense in
$\C(\SB^n \times \SB^d, \SB^p)$. Making no use of
Conjecture~\ref{con-a}, we obtain the following weaker result.

\begin{example}\label{ex-1-6}
If $n$, $d$ and $p$ are positive integers satisfying $n+d+2 \leq 2p$,
then $\RC_0(\SB^n \times \SB^d, \SB^p)$ is dense in $\C(\SB^n \times
\SB^d, \SB^p)$. Indeed, we may assume that $n \leq d$. Then $n+1 \leq p$
and hence the assertion follows from Theorem~\ref{th-1-5}.
\end{example}

Some special techniques are available when one studies regular or
continuous rational maps with values in $\SB^p$ for $p$ equal to $1$,
$2$ or $4$, cf. \cite{bib4, bib5, bib14, bib15, bib16, bib17, bib19}.
According to \cite[Theorem~1.2]{bib17}, each continuous map $f \colon X
\to \SB^2$ with $f^*(s_2) \in A^2(X; \Z)$ can be approximated by
continuous rational maps. For maps with values in $\SB^4$ we have the
following result.

\begin{theorem}\label{th-1-7}
Let $X$ be a compact nonsingular real algebraic variety. Assume that for
each integer $k \geq 3$, the only torsion in the cohomology group
$H^{2k}(X;\Z)$ is relatively prime to $(k-1)!$. A continuous map $f
\colon X \to \SB^4$ can be approximated by continuous rational maps,
provided that the cohomology class $f^*(s_4) \in H^4(X;\Z)$ is adapted.
\end{theorem}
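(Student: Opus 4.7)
The plan is to establish the implication ``$f^*(s_4)$ adapted $\Rightarrow$ $f$ approximable by continuous rational maps'' in three steps: first produce a continuous rational map $h \colon X \to \SB^4$ realising the same primary cohomology class as $f$; second use obstruction theory, together with the torsion hypothesis, to deduce that $f$ is homotopic to $h$; third invoke the approximation theorem characterising $\RC^0(X,\SB^4)$ from \cite{bib16} to conclude.

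For the first step, since $f^*(s_4) \in H^4(X;\Z)$ is adapted, there exists a nonsingular Zariski locally closed codimension-$4$ subvariety $Z \subset X$, equal to the nonsingular locus of its Zariski closure, that is a compact smooth submanifold with trivial normal bundle and satisfies $\llbracket Z \rrbracket^X = f^*(s_4)$ for a suitable orientation. An algebraic Pontryagin--Thom collapse applied to a Zariski open regular neighbourhood of $Z$, using a regular trivialisation of the normal bundle, produces a continuous rational map $h \colon X \to \SB^4$ with $h^*(s_4) = \llbracket Z \rrbracket^X = f^*(s_4)$. Constructions of exactly this type underlie the sphere-target results in \cite{bib14, bib16, bib17}, where both the tubular neighbourhood and the collapse to the top cell of $\SB^4$ are realised within the continuous rational category.

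For the second step, since $f^*(s_4) = h^*(s_4)$, the maps $f$ and $h$ are homotopic when composed with the first Postnikov section $\SB^4 \to K(\Z,4)$. The obstructions to lifting a homotopy up the Postnikov tower of $\SB^4$ live in groups $H^{k+1}(X;\pi_k(\SB^4))$ for $k \geq 4$. The groups $\pi_k(\SB^4)$ are finite for $k \neq 4,7$, and the extra $\Z$-summand in $\pi_7(\SB^4)$ is detected by the primary invariant already matched. By classical computations the torsion of $\pi_{k}(\SB^4)$ in the relevant range is $2$- and $3$-primary with order dividing $(\ell-1)!$ when one writes the obstruction in an even-degree cohomology group $H^{2\ell}(X;-)$ with $\ell \geq 3$. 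Via the universal coefficient theorem, an obstruction in $H^{2\ell}(X;\pi_k(\SB^4))$ splits into a $\operatorname{Hom}$ part and an $\operatorname{Ext}$ part, both of which can be shown to be killed by the coprimality of the torsion of $H^{2\ell}(X;\Z)$ with $(\ell-1)!$. The odd-degree obstructions are handled by a parallel but easier argument, using parity and the fact that $\SB^4$ has cells only in degrees $0$ and $4$. This forces every successive obstruction to vanish and produces a homotopy $f \simeq h$.

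Finally, with $f \simeq h$ and $h \in \RC^0(X,\SB^4)$, the main theorem of \cite{bib16} yields that $f$ can be approximated by continuous rational maps. The decisive difficulty is the second step: one must identify the $k$-invariants of the Postnikov tower of $\SB^4$ precisely enough that their $2$- and $3$-primary orders fit the divisibility pattern $(k-1)!$ supplied by the hypothesis, and then exploit the universal coefficient sequence to translate the coprimality condition on $H^{2k}(X;\Z)$ into the vanishing of the relevant obstruction classes. This technical matching is what restricts the argument to $\SB^4$ and to the stated hypothesis on $H^{2k}(X;\Z)$.
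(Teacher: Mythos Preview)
Your Step~2 is where the argument breaks down, and not merely at the level of missing details. The claim that under the torsion hypothesis any two maps $f,h\colon X\to\SB^4$ with $f^*(s_4)=h^*(s_4)$ are homotopic is false. Take $X=\SB^7$: every $H^{2k}(\SB^7;\Z)$ with $k\ge 3$ vanishes, so the hypothesis holds vacuously, and $H^4(\SB^7;\Z)=0$, so \emph{all} maps have $f^*(s_4)=0$; yet $[\SB^7,\SB^4]=\pi_7(\SB^4)\cong\Z\oplus\Z/12$ is far from trivial. In particular your assertion that ``the extra $\Z$-summand in $\pi_7(\SB^4)$ is detected by the primary invariant already matched'' is incorrect: the Hopf map $\SB^7\to\SB^4$ has trivial $f^*(s_4)$ but is not null-homotopic. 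The odd-degree obstructions cannot be dismissed by ``parity and the cell structure of $\SB^4$'' either: the obstructions live in the cohomology of $X$, not of the target, and for odd $k+1$ the hypothesis says nothing about $H^{k+1}(X;\Z)$. Even in even degrees the hypothesis constrains only the \emph{torsion} of $H^{2\ell}(X;\Z)$, so $H^{2\ell}(X;\Z/2)$ can be nonzero whenever there is a free summand; the obstruction \emph{groups} do not vanish, and you have no mechanism to show the obstruction \emph{classes} do.

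The paper's proof avoids this difficulty by never trying to prove $f\simeq h$. It identifies $\SB^4$ with the quaternionic Grassmannian $\G_1(\HB^2)$, so that the problem becomes one about the pullback $\HB$-line bundle $\eta=f^*\gamma$. The torsion hypothesis is \emph{exactly} the hypothesis of Peterson's theorem \cite[Theorem~3.2]{bib23}, which says that on such $X$ complex vector bundles are determined up to stable equivalence by their Chern classes. One builds (via your Step~1, which is essentially correct and is Lemma~\ref{lem-2-9}) a stratified-algebraic $\HB$-line bundle $\xi$ with $c_2(\xi_\CB)=c_2(\eta_\CB)$; Peterson then gives $\xi_\CB$ and $\eta_\CB$ stably equivalent; and the stratified-algebraic bundle machinery of \cite{bib19} (Theorems~\ref{th-2-6}, \ref{th-2-7}, \ref{th-2-8}) promotes this to a stratified-algebraic structure on $\eta$ and hence to approximability of $f$ by stratified-regular maps. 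The point is that the Postnikov tower of $BU$ has only even homotopy groups, all equal to $\Z$, and \emph{that} is why the hypothesis mentions only even-degree cohomology; the Postnikov tower of $\SB^4$ is far messier and does not match the stated hypothesis. Your Step~3 would also need repair: ``homotopic to a continuous rational map $\Rightarrow$ approximable'' is not a theorem of \cite{bib16} in this generality; the relevant input is Theorem~\ref{th-2-8} for Grassmannian targets.
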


The assumption in Theorem~\ref{th-1-7} that the cohomology class
$f^*(s_4) \in H^4(X;\Z)$ is adapted is not very convenient and it would
be desirable to replace it by the condition $f^*(s_4) \in A^4(X; \Z)$.
This can be done at least for $\dim X \leq 7$.

\begin{theorem}\label{th-1-8}
Let $X$ be a compact nonsingular real algebraic variety of dimension at
most $7$. Assume that the cohomology group $H^6(X;\Z)$ has no
$2$-torsion. A continuous map $f \colon X \to \SB^4$ can be approximated
by continuous rational maps, provided that $f^*(s_4) \in A^4(X;\Z)$.
\end{theorem}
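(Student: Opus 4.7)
The plan is to reduce Theorem~\ref{th-1-8} to Theorem~\ref{th-1-7} by showing that, under the given hypotheses, every element of $A^4(X;\Z)$ is itself adapted. Once this is established, the hypothesis $f^*(s_4) \in A^4(X;\Z)$ upgrades to ``$f^*(s_4)$ is adapted,'' and Theorem~\ref{th-1-7} delivers the desired approximation directly.

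First I would verify that the torsion hypothesis of Theorem~\ref{th-1-7} is automatic here. Since $\dim X \leq 7$, we have $H^{2k}(X;\Z) = 0$ for all $k \geq 4$, so only $k = 3$ is relevant, and $(k-1)! = 2$. The condition ``the only torsion in $H^{2k}(X;\Z)$ is relatively prime to $(k-1)!$'' thus reduces to ``no $2$-torsion in $H^6(X;\Z)$,'' which is precisely the hypothesis of Theorem~\ref{th-1-8}.

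The core of the argument is therefore to show that adapted classes in $H^4(X;\Z)$ form a subgroup, so that this subgroup coincides with $A^4(X;\Z)$. Closure under negation is immediate: if $u = \llbracket Z \rrbracket^X$ for a nonsingular Zariski locally closed codimension-$4$ subvariety $Z$ with a chosen orientation of its trivial normal bundle, then $-u = \llbracket Z \rrbracket^X$ for the same $Z$ with the opposite orientation. For closure under addition, given adapted classes $u_i = \llbracket Z_i \rrbracket^X$ with $i=1,2$, the dimension count
\begin{equation*}
\dim Z_1 + \dim Z_2 = 2(\dim X - 4) < \dim X
\end{equation*}
(which holds exactly when $\dim X \leq 7$) shows that a generic smooth perturbation of $Z_1$ is disjoint from $Z_2$. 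Assuming such a perturbation can be realized algebraically, say by a nonsingular Zariski locally closed codimension-$4$ subvariety $Z_1' \subset X$ with trivial normal bundle, smoothly isotopic to $Z_1$, and with $\overline{Z_1'} \cap \overline{Z_2} = \emptyset$, the union $Z_1' \cup Z_2$ is again a nonsingular Zariski locally closed codimension-$4$ subvariety with trivial normal bundle (by combining the framings on each component), and additivity of Thom classes on a disjoint union yields
\begin{equation*}
\llbracket Z_1' \cup Z_2 \rrbracket^X = \llbracket Z_1' \rrbracket^X + \llbracket Z_2 \rrbracket^X = u_1 + u_2.
\end{equation*}
Induction on the number of summands then handles arbitrary $\Z$-linear combinations.

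The hard part will be the algebraic realization of the generic-position step: moving $Z_1$ off $Z_2$ while staying inside the class of nonsingular Zariski locally closed codimension-$4$ subvarieties with trivial normal bundle and preserving the cohomology class. This should follow from an algebraic approximation/isotopy theorem in the spirit of Akbulut--King, exploiting the trivial normal bundle of $Z_1$ to parametrize small perturbations by regular sections of a tubular neighborhood. With the subgroup property in hand, Theorem~\ref{th-1-7} completes the proof.
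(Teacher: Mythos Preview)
Your verification that the torsion hypothesis of Theorem~\ref{th-1-7} reduces, when $\dim X\le 7$, to the hypothesis on $H^6(X;\Z)$ is correct. The rest of the proposal, however, has a genuine gap at exactly the step you yourself flag as ``the hard part.''

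You want to move the algebraic subvariety $Z_1$ to another nonsingular Zariski locally closed subvariety $Z_1'$, still with trivial normal bundle and representing the same class, with $\overline{Z_1'}\cap\overline{Z_2}=\varnothing$. Smoothly this is immediate from transversality; algebraically it is not. Algebraic subvarieties are rigid: the fact that $Z_1$ admits smooth isotopies into generic position relative to $Z_2$ gives no reason for any nearby submanifold to be algebraic again. Invoking ``an Akbulut--King type'' result does not help: the Akbulut--King theorem produces an algebraic model of a smooth submanifold with algebraic bordism class only after passing to $V\times\R$ (cf.\ \cite[Theorem~F]{bib1}); getting it inside $V$ itself is precisely the content of the Approximation Conjecture and Conjecture~\ref{con-bp}, which the paper states as open. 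Even granting an algebraic $Z_1'$ disjoint from $Z_2$, you would still need the \emph{Zariski closures} to be disjoint for $Z_1'\cup Z_2$ to sit as the nonsingular locus of a single Zariski closed set, and the singular loci of those closures are not under your control. So the claim that adapted classes in $H^4(X;\Z)$ form a subgroup when $\dim X\le 7$ is not established; no tool in the paper (or cited literature) fills this step.

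The paper avoids this entirely by replacing the geometric ``sum of subvarieties'' with a direct sum of vector bundles. Each adapted summand $u_i$ is realized as $c_2((\xi_i)_{\CB})$ for a stratified-algebraic $\HB$-line bundle $\xi_i$ (Lemma~\ref{lem-2-9}); then $\theta=\xi_1\oplus\cdots\oplus\xi_k$ is automatically stratified-algebraic with $c_2(\theta_{\CB})=\sum u_i$. The bound $\dim X\le 7$ enters not through a transversality count but through stability of $\HB$-bundles: over a space of dimension at most $7$, any $\HB$-bundle splits as $\xi\oplus\varepsilon$ with $\xi$ an $\HB$-line bundle (cf.\ \cite[p.~99]{bib10}). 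Theorem~\ref{th-2-7} then equips $\xi$ with a stratified-algebraic structure, after which the Chern-class comparison and the rest of the proof of Theorem~\ref{th-1-7} go through verbatim. The vector-bundle route supplies precisely what the geometric route lacks: an ``addition'' that stays inside the (stratified-)algebraic category without any general-position maneuver on subvarieties.
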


One might wonder whether the condition ${f^*(s_4) \in A^4(X; \Z)}$ can be
replaced by 
\begin{equation*}
f^*(\bar{s}_4) \in A^4(X; \Z/2).
\end{equation*}
We only have a partial
result.

\begin{theorem}\label{th-1-9}
Let $X$ be a compact nonsingular real algebraic variety of dimension at
most~$7$. Assume that the cohomology group $H^6(X;\Z)$ has no
$2$-torsion and $\Hsph^4(X;\Z) = H^4(X;\Z)$. A~continuous map $f \colon
X \to \SB^4$ can be approximated by continuous rational maps, provided that
\begin{equation*}
f^*(\bar{s}_4) \in A^4(X; \Z/2).
\end{equation*}
\end{theorem}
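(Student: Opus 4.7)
The plan is to reduce Theorem~\ref{th-1-9} to Theorem~\ref{th-1-8}. The dimension and torsion hypotheses are identical, so it suffices to upgrade the mod-$2$ condition $f^*(\bar{s}_4) \in A^4(X;\Z/2)$ to the integral condition $f^*(s_4) \in A^4(X;\Z)$. Using the identity $\rho(A^4(X;\Z)) = A^4(X;\Z/2)$ recorded in the introduction, one picks $u \in A^4(X;\Z)$ with $\rho(u) = f^*(\bar{s}_4) = \rho(f^*(s_4))$. The Bockstein sequence associated to $0 \to \Z \xrightarrow{\cdot 2} \Z \to \Z/2 \to 0$ identifies $\ker \rho$ with $2 H^4(X;\Z)$ unconditionally, so $f^*(s_4) = u + 2w$ for some $w \in H^4(X;\Z)$. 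The problem is then to prove $2w \in A^4(X;\Z)$.

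Here the hypothesis $\Hsph^4(X;\Z) = H^4(X;\Z)$ enters: $w$ is a $\Z$-linear combination of classes $g_i^*(s_4)$ with smooth $g_i \colon X \to \SB^4$, and transversality at a regular value represents each $g_i^*(s_4)$ as $\llbracket M_i \rrbracket^X$ for a compact smooth codimension-$4$ submanifold $M_i \subset X$ with trivially framed normal bundle. Then $2 \llbracket M_i \rrbracket^X$ is represented by the disjoint union $M_i \sqcup M_i'$, where $M_i'$ is an isotopic copy of $M_i$ made disjoint from $M_i$ and equipped with the same framing. Because $\NF_4(X)$ is a $\Z/2$-module, the unoriented bordism class of $M_i \sqcup M_i' \hookrightarrow X$ equals $2 [M_i \hookrightarrow X] = 0$, and is thus trivially algebraic.

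The \emph{main obstacle} is the next step: passing from this null-algebraicity to an actual nonsingular Zariski locally closed subvariety of $X$ that is $\varepsilon$-isotopic, with matching framing, to $M_i \sqcup M_i'$. This is a very special case of Conjecture~\ref{con-bp}, since $M_i \sqcup M_i'$ literally bounds the framed cylinder $M_i \times [0,1]$ inside $X$. The plan is to avoid invoking Conjecture~\ref{con-bp} in full by carrying out the construction directly, most plausibly by combining the unconditional result \cite[Theorem~F]{bib1}, which produces an algebraic approximation of $M \times \{0\}$ in $X \times \R$, with a collapsing argument that exploits the dimension bound $\dim X \leq 7$ and the absence of $2$-torsion in $H^6(X;\Z)$ to eliminate the auxiliary $\R$-factor while preserving the framing data.

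Once $2w \in A^4(X;\Z)$ is established, $f^*(s_4) = u + 2w$ lies in $A^4(X;\Z)$, and Theorem~\ref{th-1-8} immediately yields approximation of $f$ by continuous rational maps, completing the proof.
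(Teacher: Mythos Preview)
Your overall reduction to Theorem~\ref{th-1-8} is correct and matches the paper exactly: write $f^*(s_4) = u + 2w$ with $u \in A^4(X;\Z)$ and $w \in H^4(X;\Z) = \Hsph^4(X;\Z)$, and then show $2w \in A^4(X;\Z)$. You even isolate the right geometric fact, that $M_i \sqcup M_i'$ bounds a framed cylinder inside $X$.

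The gap is in how you propose to resolve your ``main obstacle.'' The plan to invoke \cite[Theorem~F]{bib1} and then ``collapse'' the auxiliary $\R$-factor using the dimension bound and the $2$-torsion hypothesis is vague and, more importantly, misdirected: neither hypothesis is needed at this step. The paper instead proves a clean general lemma (Lemma~\ref{lem-2-10}): for \emph{any} spherical class $v \in H^p(X;\Z)$ on \emph{any} compact nonsingular $X$, the class $2v$ is adapted, so $2\Hsph^p(X;\Z) \subseteq A^p(X;\Z)$ unconditionally. Its proof is exactly the bounding observation you already made, followed by a direct citation of \cite[Lemma~2.3]{bib13}, which says that a compact smooth submanifold of $X$ with trivial normal bundle that bounds a compact framed submanifold-with-boundary of $X$ is isotopic to a nonsingular Zariski \emph{closed} subvariety of $X$. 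No passage through $X \times \R$ is required. Thus the dimension bound $\dim X \leq 7$ and the absence of $2$-torsion in $H^6(X;\Z)$ play no role in obtaining $2w \in A^4(X;\Z)$; they enter only through the invocation of Theorem~\ref{th-1-8} at the end.
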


The last three theorems can be strengthened if the following holds.

\begin{conjecture}\label{con-c}
Any continuous rational map from a compact nonsingular real algebraic
variety~$X$ into the $p$-sphere can be approximated by nice continuous
rational maps.
\end{conjecture}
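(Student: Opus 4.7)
The plan is to reduce Conjecture~\ref{con-c} to a local perturbation problem near the indeterminacy locus. Fix $f \in \RC^0(X,\SB^p)$ and write $P = P(f)$ and $U = X \setminus P$; niceness of $f$ is equivalent to $f(P) \ne \SB^p$, so we may assume $f(P) = \SB^p$. Using Hironaka's theorem I would first take a composition $\sigma \colon \widetilde X \to X$ of blow-ups of $X$ along nonsingular algebraic centers contained in $P$ such that the rational map $f \circ \sigma$ extends to a regular map $\tilde f \colon \widetilde X \to \SB^p$; set $E = \sigma^{-1}(P)$. Applying Sard's theorem to the restrictions of $\tilde f$ to the smooth strata of $E$, there is a point $y \in \SB^p$ which is a regular value of each such restriction, so that $F := \tilde f^{-1}(y) \cap E$ is a nonsingular algebraic subvariety of $E$. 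It now suffices to construct, arbitrarily close to $f$, a map $g \in \RC^0(X,\SB^p)$ whose indeterminacy image omits a neighborhood of $y$ in $\SB^p$.

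To build $g$, I would pass to stereographic coordinates $\SB^p \setminus \{-y\} \cong \R^p$ centered at $y$ and write $f|_U$ in this affine chart as a tuple $(f_1,\ldots,f_p)/h$ with $f_i, h$ regular on $X$ and $h$ vanishing on an algebraic set containing $P$. For regular functions $\varepsilon_1,\ldots,\varepsilon_p$ on $X$ that are small in sup norm but generic along an algebraic approximation of $\sigma(F)$, form $g$ by replacing $(f_1,\ldots,f_p)$ with $(f_1+\varepsilon_1 h,\ldots,f_p+\varepsilon_p h)$ and composing with the inverse stereographic projection. Such $g$ is continuous rational and close to $f$, and differs from $f$ on $U$ by an $O(h)$ term that is designed to push the image of the indeterminacy locus of $g$ off $y$.

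The main obstacle is the last step: rigorously showing that a generic such additive perturbation indeed moves the indeterminacy image of the renormalized map off $y$. This is delicate because the vanishing of $h$ along $P$ and the transversality of $\tilde f$ along $E$ must interact with the algebraic perturbations $\varepsilon_i$ in a controlled way, and I expect this to require a normal-form analysis of $f$ along the smooth strata of $P$, combined with approximation of smooth functions on $X$ by regular functions in a strong enough form. A parallel route that bypasses this delicate local analysis is to derive Conjecture~\ref{con-c} from Conjecture~\ref{con-a}: after smoothing $f$, a regular value $y$ yields a compact smooth submanifold $f^{-1}(y) \subset X$ with trivial normal bundle representing $f^*(s_p)$, and if one could show that this submanifold is isotopic in $X$ to a nonsingular Zariski locally closed subvariety with trivial normal bundle, then $f^*(s_p) \in A^p(X;\Z)$ and the implication \ref{A5} $\Rightarrow$ \ref{A1} of Conjecture~\ref{con-a} would supply the required nice approximation. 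This is a variant of Conjecture~\ref{con-bp}, and I expect any proof of Conjecture~\ref{con-c} to proceed through some form of algebraic approximation of smooth submanifolds of this kind.
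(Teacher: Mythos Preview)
The statement you are trying to prove is Conjecture~\ref{con-c}, and the paper does \emph{not} prove it. Immediately after stating it the authors write: ``Conjecture~\ref{con-c} is known to hold if $p=1$ or $\dim X \leq p+1$, cf.~\cite{bib16}.'' That is all; no argument for the general case is offered, and the conjecture is left open. So there is no proof in the paper against which to compare your attempt.

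Your proposal is honest about its own incompleteness: you correctly identify that the decisive step---showing that a generic regular perturbation of the numerator actually moves the indeterminacy image off the chosen point $y$---is unproved, and you describe it as ``delicate'' and requiring a normal-form analysis you have not carried out. This is indeed the crux, and nothing in the paper suggests the authors know how to do it either. Your alternative route through Conjecture~\ref{con-a} is circular as a proof strategy, since Conjecture~\ref{con-a} is itself open (and in fact the paper shows only that Conjecture~\ref{con-bp} implies Conjecture~\ref{con-a}, not the converse); moreover, to feed an arbitrary continuous rational $f$ into condition~\ref{A5} you would first need to know that $f^*(s_p)\in A^p(X;\Z)$, and the implication \ref{A2}~$\Rightarrow$~\ref{A3} proved in~\cite{bib17} applies only to \emph{nice} continuous rational maps, which is exactly what you do not yet have.

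In short: you have not proved the conjecture, but neither has the paper; your sketch is a plausible line of attack whose gap you have accurately located.
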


Conjecture~\ref{con-c} is known to hold if $p=1$ or $\dim X \leq p+1$,
cf. \cite{bib16}.

\section{Proofs}\label{sec-2}

For the clarity of the exposition we recall the following approximation
criterion.

\begin{theorem}[{\cite[Theorem~1.2]{bib16}}]\label{th-2-1}
Let $X$ be a compact nonsingular real algebraic variety and let $f
\colon X \to \SB^p$ be a smooth map. Assume that there exists a regular
value $y \in \SB^p$ of $f$ such that the smooth submanifold $f^{-1}(y)$
is $\varepsilon$-isotopic to a nonsingular Zariski locally closed
subvariety of~$X$. Then $f$ can be approximated by nice continuous
rational maps.
\end{theorem}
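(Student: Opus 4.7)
The plan is to realize $f$ up to $C^0$-approximation via a Pontryagin--Thom construction carried out in the algebraic category. The smooth map $f$ is, up to homotopy, determined by the framed submanifold $(f^{-1}(y), df|_{f^{-1}(y)})$, and the idea is to replace this framed submanifold by its algebraic model built from $Z$, then correct outside a tubular neighborhood of $Z$ in order to recover $C^0$-closeness.

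First, I would invoke the smooth isotopy extension theorem to convert the $\varepsilon$-isotopy between $M := f^{-1}(y)$ and $Z$ into a diffeomorphism $h$ of $X$, $C^\infty$-close to $\mathrm{id}_X$, such that $h(M) = Z$. Then $f_1 := f \circ h^{-1}$ is $C^0$-close to $f$, admits $y$ as a regular value, and satisfies $f_1^{-1}(y) = Z$. Thus we may assume from the start that $f^{-1}(y) = Z$, with smooth framing $\varphi := df|_Z \colon \nu_Z \to T_y\SB^p \cong \R^p$. Since $Z$ is a nonsingular Zariski locally closed subvariety, the normal bundle $\nu_Z$ carries a natural algebraic structure, and by standard approximation of smooth sections of algebraic vector bundles over nonsingular real algebraic varieties, I would approximate $\varphi$ in $C^\infty$ by a regular trivialization $\Phi$.

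Second, combining $\Phi$ with a regular tubular neighborhood of $Z$ yields a regular map $\pi \colon U \to \R^p$ on a Zariski open $U \supset Z$, with $\pi^{-1}(0) = Z$ and $\pi$ $C^\infty$-close to $\sigma \circ f$ on a smooth neighborhood of $Z$, where $\sigma \colon \SB^p \setminus \{-y\} \to \R^p$ denotes stereographic projection from $-y$. With a careful choice of the algebraic model ensuring $\lvert \pi(x) \rvert \to \infty$ as $x$ approaches the boundary of $U$, the composition $\sigma^{-1} \circ \pi$ (with $\sigma^{-1}$ regular on all of $\R^p$) extends continuously to $X$ by the constant value $-y$ on the indeterminacy locus, yielding a nice continuous rational map $g \colon X \to \SB^p$ with $g^{-1}(y) = Z$ and $dg|_Z$ close to $\varphi$.

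Third, to promote this local agreement to a global $C^0$-approximation of $f$, I would correct $g$ outside a small tubular neighborhood of $Z$. Since both $f$ and $g$ avoid $y$ away from $Z$, they factor through $\SB^p \setminus \{y\}$ on that region; via stereographic projection from $y$ one compares them as smooth maps into $\R^p$, and standard Weierstrass-type approximation of smooth functions by regular ones on the compact nonsingular variety $X$ allows the required correction to be realized by regular functions. The main obstacle, and the hardest part of the proof, is to execute this correction while preserving the continuous rational structure near $Z$ and the ``nice'' property globally: the algebraic patching between the rational model $g$ near $Z$ and the regular correction far from $Z$ must be arranged so that the indeterminacy set of the resulting rational map continues to map into a proper subset of $\SB^p$.
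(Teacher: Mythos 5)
First, note that this paper does not prove Theorem~\ref{th-2-1} at all: it is quoted verbatim from \cite[Theorem~1.2]{bib16} as a black-box approximation criterion, so there is no in-paper proof to compare with; your proposal has to be judged against the cited source's argument. Your Steps 1--2 are a sensible start and essentially reproduce a known construction: transport $f^{-1}(y)$ onto $Z$ by a small ambient diffeotopy, realize the framing by regular data on a Zariski neighborhood of $Z$, and push the indeterminacy locus to the antipodal point $-y$ so that the resulting continuous rational map is nice. Even granting the details you elide (algebraic triviality of the normal bundle of the merely Zariski locally closed $Z$, which needs the results on algebraic vector bundles over the compact variety $Z$ after removing $\overline{Z}\setminus Z$; the fact that matching $1$-jets along $Z$ does not by itself give $\C^{\infty}$-closeness of $\pi$ to $\sigma\circ f$ on a fixed neighborhood; and the ``$\abs{\pi}\to\infty$ at the boundary'' normalization, which requires a {\L}ojasiewicz-type estimate with powers of an equation of the bad set, as in \cite{bib14}), what this construction delivers is a nice continuous rational map $g$ with the same framed fibre over $y$ as $f$ --- i.e.\ by Pontryagin--Thom a map \emph{homotopic} to $f$. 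That is the homotopy statement (essentially \cite[Theorem~2.4]{bib14}), not the approximation statement, and the present paper itself stresses that ``homotopic to nice continuous rational'' is not known to imply ``approximable'' in general (it is known only under a dimension restriction, via \cite{bib18}).

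The genuine gap is therefore your Step 3, which is exactly where the content of the theorem lies, and which you yourself flag as unresolved. There is no mechanism in the proposal for converting agreement of $f$ and $g$ near $Z$ into global $C^0$-closeness: continuous rational maps cannot be patched by cut-off corrections, and a Weierstrass-type regular approximation of $f$ on $X\setminus(\text{tube around }Z)$ produces a new map with no relation to the rational model $g$ near $Z$; you would have to exhibit a \emph{single} continuous rational map, close to $f$ everywhere, still having $Z$ as the fibre over $y$ and still nice, and no construction achieving this merge is offered. In particular the phrase ``standard Weierstrass-type approximation \ldots allows the required correction'' does not describe an argument: away from $Z$ the maps $f$ and $g$ into $\SB^p\setminus\{y\}\cong\R^p$ may be arbitrarily far apart, and bringing $g$ to within $\varepsilon$ of $f$ there while preserving rationality near $Z$ and the behaviour of $P(g)$ is precisely the difficulty the proof in \cite{bib16} is designed to overcome (there the rational map is built so as to be uniformly close to $f$ from the outset, rather than corrected a posteriori). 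As it stands, the proposal proves (modulo the secondary gaps above) only that $f$ is homotopic to a nice continuous rational map, not that it can be approximated by such maps.
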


The proof of Proposition~\ref{prop-1-4} will be preceded by two lemmas.
As usual, given a smooth manifold $M$, we denote by $w_i(M)$ its $i$th
Stiefel--Whitney class. If the manifold $M$ is compact, let $[M]$ denote
its fundamental class in the homology group $H_*(M; \Z/2)$.

\begin{lemma}\label{lem-2-2}
Let $N$ and $P$ be compact smooth manifolds, and let $f \colon N \to P$
be a smooth immersion with trivial normal bundle. If $f_*([N]) = 0$ in
$H_*(P; \Z/2)$, then the unoriented bordism class of the map $f$ is
zero.
\end{lemma}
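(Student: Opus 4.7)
The plan is to use the standard fact that an unoriented bordism class $[f\colon N\to P]\in\NF_*(P)$ is zero if and only if every ``$P$-characteristic Stiefel--Whitney number''
\[
\langle w^I(N)\smile f^*(x),\,[N]\rangle\in\Z/2
\]
vanishes, where $w^I$ ranges over all monomials in the Stiefel--Whitney classes of $N$ and $x$ ranges over $H^*(P;\Z/2)$. This is the classical generalisation of Thom's theorem to singular bordism; it is a direct consequence of the splitting of the Thom spectrum $\MO$ as a wedge of suspensions of Eilenberg--MacLane spectra $H\Z/2$, which gives an isomorphism $\NF_n(P)\cong\bigoplus_{i+j=n}\NF_i\otimes H_j(P;\Z/2)$.

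Granted this detection criterion, the hypotheses trivialise the computation. Because $f$ is an immersion with trivial normal bundle $\nu$ of rank $k=\dim P-\dim N$, the Whitney sum formula applied to $TN\oplus\nu\cong f^*(TP)$ yields
\[
w(TN)=w(TN)\cdot w(\nu)=f^*\bigl(w(TP)\bigr),
\]
so $w_i(N)=f^*(w_i(P))$ for every $i$, and consequently $w^I(N)=f^*\bigl(w^I(TP)\bigr)$ for every monomial $w^I$. Therefore, for any $x\in H^*(P;\Z/2)$,
\[
\langle w^I(N)\smile f^*(x),\,[N]\rangle
=\langle f^*\bigl(w^I(TP)\smile x\bigr),\,[N]\rangle
=\langle w^I(TP)\smile x,\,f_*([N])\rangle.
\]
The assumption $f_*([N])=0$ forces this pairing to vanish for all $I$ and all $x$.

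All $P$-characteristic numbers of the singular manifold $(N,f)$ being zero, the detection criterion gives $[f]=0$ in $\NF_{\dim N}(P)$, which is exactly the claim.

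The only delicate point is invoking the correct form of the detection theorem. A self-contained alternative is to argue directly: if $\dim N = n$, the Thom--Pontryagin collapse converts $[f\colon N\to P]$ into an element of $\pi_n(P_+\wedge\MO)$; triviality of $\nu$ combined with $f_*([N])=0$ shows that the associated map factors, up to the $\MO$-module splitting, through components whose detection is governed precisely by the numbers computed above. Either formulation works, but the Stiefel--Whitney-number argument above is the most compact route.
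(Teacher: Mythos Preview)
Your argument is correct and is essentially identical to the paper's proof: both invoke the Conner--Floyd detection criterion for unoriented singular bordism (the paper cites \cite[(17.3)]{bib6}), use triviality of the normal bundle to write $w_i(N)=f^*(w_i(P))$, and then push forward via $f_*$ to conclude that all relevant characteristic numbers vanish because $f_*([N])=0$.
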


\begin{proof}
According to \cite[(17.3)]{bib6}, it suffices to prove that for any
nonnegative integer $l$ and any cohomology class $u$ in $H^l(P; \Z/2)$,
the equality
\begin{equation}\label{eq-dagger}\tag{$\dagger$}
\langle w_{i_1}(N) \cupproduct \cdots \cupproduct w_{i_r}(N) \cupproduct
f^*(u), [N] \rangle = 0
\end{equation}
holds for all nonnegative integers $i_1, \ldots, i_r$ satisfying $i_1 +
\cdots + i_r = \dim N - l$.

Since the normal bundle of the immersion $f$ is trivial, we have
\begin{equation*}
w_i(N) = f^*(w_i(P))
\end{equation*}
for every nonnegative integer $i$, cf. \cite[p.~31]{bib21}. Setting
\begin{equation*}
v = w_{i_1}(P) \cupproduct \cdots \cupproduct w_{i_r}(P) \cupproduct u,
\end{equation*}
we get
\begin{equation*}
w_{i_1}(N) \cupproduct \cdots \cupproduct w_{i_r}(N) \cupproduct f^*(u)
= f^*(v).
\end{equation*}
Since
\begin{equation*}
\langle f^*(v), [N] \rangle = \langle v, f_*([N]) \rangle,
\end{equation*}
equality \eqref{eq-dagger} holds if $f_*([N]) = 0$. The proof is complete.
\end{proof}

\begin{lemma}\label{lem-2-3}
Let $X$ be a compact nonsingular real algebraic variety. Let $M$ be a
compact smooth codimension $p$ submanifold of $X$, embedded with trivial
normal bundle. Assume that the cohomology class $[M]^X$ belongs to
$A^p(X; \Z/2)$. Then the unoriented bordism class of the inclusion map
$e \colon M \hookrightarrow X$ is algebraic.
\end{lemma}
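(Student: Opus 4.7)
The plan is to realize the inclusion $e\colon M \hookrightarrow X$ as bordant, through an immersion with trivial normal bundle, to a regular map that witnesses algebraicity. Set $n = \dim X$. By the very definition of $A^p(X;\Z/2)$, I may choose compact smooth codimension $p$ Zariski locally closed subvarieties $Z_1, \ldots, Z_k$ of $X$, each embedded with trivial normal bundle, such that
\[
[M]^X = [Z_1]^X + \cdots + [Z_k]^X \quad \text{in } H^p(X;\Z/2).
\]
The subvarieties $Z_i$ may meet each other or $M$ inside $X$, but this will not matter since all that is used is the map to $X$.

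I then form the abstract disjoint union of smooth manifolds $N := M \sqcup Z_1 \sqcup \cdots \sqcup Z_k$ and define $f\colon N \to X$ to agree with the inclusion into $X$ on each summand. Then $f$ is a smooth immersion, since each summand is a smooth submanifold of $X$, and its normal bundle is trivial: on each summand it coincides with the trivial normal bundle of that submanifold in $X$, and the trivializations on the individual summands combine to a global trivialization. I would next compute, in $H_{n-p}(X;\Z/2)$,
\[
f_*([N]) = e_*([M]) + \sum_{i=1}^{k} (Z_i \hookrightarrow X)_*([Z_i]).
\]
By Poincar\'e duality, as recalled in Section~\ref{sec-1}, each of these pushforwards is dual to the corresponding class $[\,\cdot\,]^X$; hence $f_*([N])$ is Poincar\'e dual to $[M]^X + \sum_i [Z_i]^X = 0$, so $f_*([N]) = 0$. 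Lemma~\ref{lem-2-2} then yields that the unoriented bordism class of $f$ vanishes in $\NF_{n-p}(X)$.

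Since $\NF_{n-p}(X)$ is $2$-torsion, this reads $[e] = \sum_{i=1}^{k}[Z_i \hookrightarrow X]$ in $\NF_{n-p}(X)$. The right-hand side is represented by the regular map from the compact nonsingular real algebraic variety $Z_1 \sqcup \cdots \sqcup Z_k$ into $X$ given by the disjoint union of the algebraic inclusions, and is therefore algebraic by definition. Consequently the bordism class of $e$ is algebraic, as required. No serious obstacle arises; the only thing to verify with care is that the constructed $f$ really is an immersion with globally trivial normal bundle, so that Lemma~\ref{lem-2-2} applies and converts the homological identity $f_*([N])=0$ into a bordism identity.
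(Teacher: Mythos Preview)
Your proof is correct and follows essentially the same approach as the paper's: form the disjoint union $N=M\sqcup Z_1\sqcup\cdots\sqcup Z_k$, observe that the combined map $f\colon N\to X$ is an immersion with trivial normal bundle and $f_*([N])=0$ by Poincar\'e duality, apply Lemma~\ref{lem-2-2}, and conclude that $[e]$ equals the algebraic class represented by the regular map $Z_1\sqcup\cdots\sqcup Z_k\to X$. The only cosmetic difference is that you invoke $2$-torsion of $\NF_{n-p}(X)$ explicitly, whereas the paper phrases the last step as ``$e$ and $g$ represent the same unoriented bordism class.''
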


\begin{proof}
By the definition of $A^p(X; \Z/2)$, we have
\begin{equation*}
[M]^X = [Z_1]^X + \cdots + [Z_k]^X,
\end{equation*}
where $Z_i$ is a nonsingular codimension $p$ Zariski locally closed
subvariety of $X$, which is a compact smooth submanifold with trivial
normal bundle, $1 \leq i \leq k$. Let $Z$ be the disjoint union of
the~$Z_i$, and let $g \colon Z \to X$ be the map whose restriction to
$Z_i$
corresponds to the inclusion map $Z_i \hookrightarrow X$. By
construction, $Z$ is a compact nonsingular real algebraic variety and $g
\colon Z \to X$ is a regular map. Since the cohomology class $[M]^X$
(resp. $[Z_1]^X + \cdots + [Z_k]^X$) is Poincar\'e dual to the
cohomology class $e_*([M])$ (resp. $g_*([Z])$), we get
\begin{equation*}
e_*([M]) = g_*([Z]).
\end{equation*}
Let $N$ be the disjoint union of $M$ and $Z$, and let $f \colon N \to X$
be the map whose restriction to $M$ (resp. $Z$) corresponds to $e$ (resp.
$g$). Note that $f \colon N \to X$ is a smooth immersion with trivial
normal bundle. Furthermore, $f_*([N]) = 0$. In view of
Lemma~\ref{lem-2-2}, the maps $e \colon M \hookrightarrow X$ and $g
\colon Z \to X$ represent the same unoriented bordism class, which
completes the proof.
\end{proof}

\begin{proof}[Proof of Proposition~\ref{prop-1-4}]
Suppose that Conjecture~\ref{con-bp} holds. It suffices to show that
\ref{A6} implies \ref{A1}. Assume that \ref{A6} is satisfied, that is,
\begin{equation*}
f^*(\bar{s}_p) \in A^p(X; \Z/2).
\end{equation*}
We can assume without loss of generality that the map $f \colon X \to
\SB^p$ is smooth. By Sard's theorem there exists a regular value $y \in \SB^p$ of
$f$. The inverse image $f^{-1}(y)$ is a compact smooth submanifold of
$X$, embedded with trivial normal bundle. Since
\begin{equation*}
f^*(\bar{s}_p) = [f^{-1}(y)]^X,
\end{equation*}
it follows from Lemma~\ref{lem-2-3} that the unoriented bordism class of
the inclusion map $f^{-1}(y) \hookrightarrow X$ is algebraic. According
to Conjecture~\ref{con-bp}, the smooth submanifold $f^{-1}(y)$ is
$\varepsilon$-isotopic to a nonsingular Zariski locally closed
subvariety of $X$. Hence, in view of Theorem~\ref{th-2-1}, condition
\ref{A1} is satisfied.
\end{proof}

The proof of Theorem~\ref{th-1-5} requires some preparation.

\begin{lemma}\label{lem-2-4}
Let $Y$ be a nonsingular real algebraic variety and let $M$ be a compact
smooth submanifold of $Y$, embedded with trivial normal bundle. If $M$
is isotopic to a nonsingular Zariski closed subvariety of $Y$, then it
is $\varepsilon$-isotopic to a nonsingular Zariski closed subvariety of
$Y$.
\end{lemma}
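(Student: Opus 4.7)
My plan is to use the isotopy hypothesis to transport a Nash tubular neighborhood of $Z$ to a smooth tubular neighborhood of $M$, thereby realizing $M$ as the transverse zero set of a smooth map $f\colon Y\to\R^p$, and then approximate $f$ by a regular map whose zero set is a nonsingular Zariski closed subvariety close to $M$.

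First I would apply the isotopy extension theorem to the given isotopy, obtaining an ambient smooth isotopy $\{h_t\}_{t\in[0,1]}$ of $Y$ with $h_0=\mathrm{id}_Y$ and $h_1(Z)=M$. Since the normal bundle of $M$ in $Y$ is trivial and $h_1$ induces a diffeomorphism $Z\to M$, the smooth normal bundle of $Z$ in $Y$ is also trivial. Because $Z$ is a nonsingular Zariski closed subvariety of the nonsingular affine variety $Y$, there exists a Nash tubular neighborhood $(\Omega,\pi)$ of $Z$ in $Y$, together with a Nash trivialization yielding a Nash map $\sigma\colon\Omega\to\R^p$ with $\sigma^{-1}(0)=Z$ and $0$ a regular value.

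Transporting via $h_1$, the smooth map $\widetilde\sigma:=\sigma\circ h_1^{-1}\colon h_1(\Omega)\to\R^p$ satisfies $\widetilde\sigma^{-1}(0)=M$ and has $0$ as a regular value. I would extend $\widetilde\sigma$ by a smooth cutoff to a smooth map $f\colon Y\to\R^p$ with $f^{-1}(0)=M$, arranging that $|f|\ge c>0$ on the complement of some compact neighborhood $K$ of $M$. Embedding $Y$ as a Zariski closed subset of $\R^n$ and applying Weierstrass approximation to a smooth extension of $f$ to $\R^n$ yields regular maps $g\colon Y\to\R^p$ arbitrarily $\C^\infty$-close to $f$ on $K$. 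For $g$ sufficiently close to $f$, transversality implies that $0$ is a regular value of $g$ on a neighborhood of $M$ and that $g^{-1}(0)\cap K$ is a nonsingular submanifold of $Y$ which is $\varepsilon$-isotopic to $M$, via the implicit function theorem applied to the linear homotopy $(1-t)f+tg$.

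The main obstacle I anticipate is global control: the Zariski closed subvariety $g^{-1}(0)$ could have components outside $K$, since Weierstrass approximation controls $g$ only on $K$. To address this, the construction of $f$ must be refined so that $f$ coincides outside $K$ with a regular map bounded below in norm, ensuring that any sufficiently close approximation $g$ retains $|g|>0$ outside $K$ and hence $g^{-1}(0)\subseteq K$. This is where the hypothesis that $M$ is isotopic to an algebraic $Z$ enters essentially: the algebraic structure near $Z$, transported via $h_1$, furnishes the algebraic framework near $M$ so that $f$ can be chosen compatibly with polynomial approximation. Once $g^{-1}(0)\subseteq K$ is arranged, the set $Z':=g^{-1}(0)$ is a nonsingular Zariski closed subvariety of $Y$ that is $\varepsilon$-isotopic to $M$, completing the proof.
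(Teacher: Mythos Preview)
Your proposal has a genuine gap at the step where you extend $\widetilde\sigma$ to a smooth map $f\colon Y\to\R^p$ with $f^{-1}(0)=M$ and no further zeros. Such an $f$ need not exist, even when $Y$ is compact. Take $Y=\SB^p$ and $M$ a single point (certainly isotopic to a nonsingular Zariski closed subvariety, for instance any rational point). If $f\colon\SB^p\to\R^p$ had $f^{-1}(0)=M$ as a transverse zero, then on a small sphere $\Sigma\cong S^{p-1}$ about $M$ the map $f/|f|\colon\Sigma\to S^{p-1}$ would have degree $\pm1$; but $\Sigma$ bounds a disc in $\SB^p\setminus M$, over which $f/|f|$ extends, forcing the degree to vanish. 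More generally, writing $M$ as the transverse zero locus of a map $Y\to\R^p$ with no extraneous zeros is a genuine obstruction problem (one must extend the fibrewise identity $\partial T\to S^{p-1}$ over $Y\setminus T^\circ$) that is unrelated to the algebraic hypothesis; were it always solvable, your argument would prove Conjecture~\ref{con-bp} outright, since the isotopy to $Z$ would never enter. Transporting the Nash tube of $Z$ through the \emph{smooth} diffeomorphism $h_1$ only produces smooth data near $M$, so your suggestion that this ``furnishes the algebraic framework near $M$'' and thereby controls $g^{-1}(0)$ globally has no content.

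The paper's argument is of a completely different nature. The compact case is the 1985 result \cite[Theorem~2.1]{bib13}, which is cited rather than reproved. For noncompact $Y$, the paper compactifies: by Hironaka's theorem $Y$ sits as a Zariski open subset of a compact nonsingular variety $X$, and a second application of resolution of singularities, this time along the singular locus $S$ of the Zariski closure of $V$ in $X$, arranges that $V$ becomes Zariski closed in a compact nonsingular variety $X'$ containing $Y$. One is then in the compact situation, and the known theorem applies.
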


\begin{proof}
This is proved in \cite[Theorem~2.1]{bib13} if the ambient variety $Y$ is
compact. In the general case we can argue as follows. Suppose that $M$ is
isotopic to a nonsingular Zariski closed subvariety $V$ of $Y$.
According to Hironaka's theorem \cite{bib8} (cf. also \cite{bib11} for a
very readable exposition), we can assume that $Y$ is a Zariski open
subvariety of a compact nonsingular real algebraic variety~$X$. Let $A$
be the Zariski closure of $V$ in $X$. Then $S \coloneqq A \setminus V$ is
a Zariski closed subvariety of $X$, contained in $X \setminus Y$. By
Hironaka's theorem, there exists a regular map $\pi \colon X' \to X$
such that $X'$ is a compact nonsingular real algebraic variety, the
restriction $\pi_0 \colon X' \setminus \pi^{-1}(S) \to X \setminus S$ of
$\pi$ is a~biregular isomorphism, and the subvariety $\pi^{-1}(V)$ is
Zariski closed in $X'$. Identifying $Y$, $M$, $V$ with $\pi^{-1}(Y)$,
$\pi^{-1}(M)$, $\pi^{-1}(V)$, respectively, we can assume that $V$ is
Zariski closed in $X$. The proof is complete since $X$ is compact and
$M$ is isotopic to $V$ in $X$.
\end{proof}

\begin{lemma}\label{lem-2-5}
Let $X$ be a nonsingular real algebraic variety and let $d$ be a
positive integer. Let $M$ be a compact smooth submanifold of $X \times
\SB^d$, embedded with trivial normal bundle. Assume that
\begin{equation*}
2\dim M + 2 \leq \dim X + d,
\end{equation*}
the unoriented bordism class of the inclusion map $M \hookrightarrow X
\times \SB^d$ is algebraic, and $\sigma(M) \neq \SB^d$, where $\sigma
\colon X \times \SB^d \to \SB^d$ is the canonical projection. Then the
smooth submanifold $M$ is $\varepsilon$-isotopic to a nonsingular
Zariski locally closed subvariety of $X \times \SB^d$.
\end{lemma}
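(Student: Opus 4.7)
The plan is to use the hypothesis $\sigma(M)\ne\SB^d$ to pass via stereographic projection into the Zariski open subvariety $X\times\R^d\cong X\times(\SB^d\setminus\{y\})\subset X\times\SB^d$, then to exploit the dimensional bound $2\dim M+2\le\dim X+d$ to smoothly isotope $M$ into a codimension-one slice $X\times\R^{d-1}\times\{0\}$, and finally to invoke \cite[Theorem~F]{bib1} (the result cited just before Conjecture~\ref{con-bp}) with $V=X\times\R^{d-1}$ so as to reinstate the extra $\R$ direction algebraically.

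Since $\sigma(M)$ is a proper compact subset of $\SB^d$, pick any $y\in\SB^d\setminus\sigma(M)$. Stereographic projection from $y$ is a biregular isomorphism $\Psi\colon X\times(\SB^d\setminus\{y\})\to X\times\R^d$ whose source is Zariski open in $X\times\SB^d$. Consequently a nonsingular Zariski locally closed subvariety of $X\times\R^d$ is nonsingular Zariski locally closed in $X\times\SB^d$, and isotopies transport back. Thus we may replace $M$ by $\Psi(M)$ and argue entirely inside $X\times\R^d$.

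Write $X\times\R^d=(X\times\R^{d-1})\times\R$ with projections $\pi$ and $p$. Because $\dim(X\times\R^{d-1})\ge 2\dim M+1$, general position supplies a smooth embedding $g\colon M\hookrightarrow X\times\R^{d-1}$ arbitrarily close to $\pi|_M$. Then $\widetilde f(x):=(g(x),p(x))$ is an embedding $C^\infty$-close to the inclusion $M\hookrightarrow X\times\R^d$, hence isotopic to it, and the homotopy $(x,t)\mapsto(g(x),(1-t)p(x))$ is an isotopy through embeddings (injectivity and immersivity both reducing to those of $g$) from $\widetilde f$ to an embedding of $M$ into $X\times\R^{d-1}\times\{0\}$. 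Composing, the inclusion is smoothly isotopic to a submanifold $M_0\subset X\times\R^{d-1}$. The bordism class of $M_0\hookrightarrow X\times\R^{d-1}$ is algebraic: since $X\times\R^{d-1}\to X$ is a homotopy equivalence one has $\NF_{\dim M}(X\times\R^{d-1})\cong\NF_{\dim M}(X)$, and composing any regular representative $\phi\colon W\to X\times\SB^d$ of $[M\hookrightarrow X\times\SB^d]$ with the projection $X\times\SB^d\to X$ furnishes a regular representative of $[M_0\to X]$. Applying \cite[Theorem~F]{bib1} to $V:=X\times\R^{d-1}$ and $M_0\subset V$ produces a nonsingular Zariski locally closed subvariety $Z\subset V\times\R=X\times\R^d$ that is $\varepsilon$-isotopic to $M_0$.

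Concatenating the isotopies constructed so far shows that $M$ is smoothly (but not a priori $\varepsilon$-) isotopic in $X\times\SB^d$ to $Z$, the latter being nonsingular Zariski locally closed in $X\times\SB^d$ via the above identification. To upgrade to an $\varepsilon$-isotopy, apply Lemma~\ref{lem-2-4} with $Y:=(X\times\SB^d)\setminus(\overline Z\setminus Z)$, where $\overline Z$ is the Zariski closure in $X\times\SB^d$: $Y$ is a nonsingular real algebraic variety in which $Z$ is Zariski closed, and a transversality perturbation keeps the isotopy in $Y$ since $\dim(\overline Z\setminus Z)\le\dim M-1$ against an isotopy image of dimension $\le\dim M+1$ in an ambient of dimension $\dim X+d\ge 2\dim M+2$, forcing generic intersection to be empty. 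The main obstacle is the slice-isotopy step in the preceding paragraph: the Whitney-range inequality $\dim(X\times\R^{d-1})\ge 2\dim M+1$, equivalent to the dimensional hypothesis, is used essentially, both to produce the lower-dimensional embedding $g$ and to ensure that the flattening homotopy $\widetilde f_t$ remains in the space of embeddings.
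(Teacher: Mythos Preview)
Your argument is correct and follows the paper's approach almost exactly: pass to $X\times\R^d$ by stereographic projection, isotope $M$ into the slice $X\times\R^{d-1}\times\{0\}$, apply Akbulut--King's Theorem~F there to obtain a nonsingular Zariski locally closed $Z$, and then upgrade the resulting (large) isotopy $M\leadsto Z$ to an $\varepsilon$-isotopy via Lemma~\ref{lem-2-4} after a transversality perturbation off $S=\overline Z\setminus Z$. The one place your route diverges is the slice step: the paper takes $\bar\varphi$ merely \emph{homotopic} to $\pi|_M$ and then invokes Whitney's theorem \cite[Theorem~6]{bib25} that homotopic embeddings into a $(\ge 2m+2)$-manifold are isotopic, whereas you take $g$ $\C^\infty$-\emph{close} to $\pi|_M$ and flatten by the explicit straight-line isotopy $(g,(1-t)p)$, which is a touch more elementary and uses the dimensional hypothesis only once (to produce $g$).

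One small point deserves more care. To invoke Lemma~\ref{lem-2-4} in $Y=(X\times\SB^d)\setminus S$ you need $M\subset Y$, but $M\cap S=\varnothing$ is not automatic. The paper handles this by perturbing the isotopy $F$ to $G$ with only the $Z$-end fixed ($G_1=F_1$), so that Lemma~\ref{lem-2-4} is applied to $N\coloneqq G_0(M)$ rather than to $M$; since $G_0$ is $\C^\infty$-close to the inclusion, one then composes with the small motion $M\leadsto N$. Your dimension count $\dim(M\times[0,1])+\dim S\le 2\dim M<\dim X+d$ is precisely what makes this transversality step work, but the fact that the $t=0$ end may move---and that Lemma~\ref{lem-2-4} is really applied to $N$---should be stated explicitly.
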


\begin{proof}
The assumption ${\sigma(M) \neq \SB^p}$ implies the existence of a point
${u \in \SB^p}$ for which
\begin{equation*}
M \subseteq X \times (\SB^d \setminus \{u\}).
\end{equation*}
Since $\SB^d \setminus \{u\}$ is biregularly isomorphic to $\R^d$, we
identify $\SB^d \setminus \{u\}$ with $\R^d$ and regard $M$ as a
submanifold of $X \times \R^d$. It remains to prove that $M$ is
$\varepsilon$-isotopic to a Zariski locally closed subvariety of $X
\times \R^d$.

Let $\varphi \colon M \to X \times \R^{d-1}$ and $\psi \colon M \to \R$
be maps such that
\begin{equation*}
f \coloneqq (\varphi, \psi) \colon M \to (X \times \R^{d-1}) \times \R =
X \times \R^d
\end{equation*}
is the inclusion map. Since
\begin{equation*}
2 \dim M + 1 \leq \dim X + d -1,
\end{equation*}
the map $\varphi$ is homotopic to a smooth embedding $\bar{\varphi}
\colon M \to X \times \R^{d-1}$, cf. \cite[p.~55]{bib9}. If $\bar{\psi}
\colon M \to \R$ is the constant map sending $M$ to $0 \in \R$, then the
map
\begin{equation*}
\bar{f} \coloneqq (\bar{\varphi}, \bar{\psi}) \colon M \to (X \times
\R^{d-1}) \times \R) = X \times \R^d
\end{equation*}
is a smooth embedding homotopic to $f$. Since
\begin{equation*}
2\dim M + 2 \leq \dim X + d,
\end{equation*}
the smooth embeddings $f$ and $\bar{f}$ are isotopic, cf.
\cite[Theorem~6]{bib25} or \cite[p.~183, Exercise~10]{bib9}. Thus $M$ is
isotopic in $(X \times \R^{d-1}) \times \R$ to the smooth submanifold
$\bar{f}(M) = \bar{M} \times \{0\}$, where $\bar{M} \coloneqq
\bar{\varphi}(M)$.

We assert that the smooth submanifold $\bar{f}(M)$ is
$\varepsilon$-isotopic to a nonsingular Zariski locally closed
subvariety of $X \times \R^d$. This can be proved as follows. Since the
unoriented bordism class of the inclusion map $e \colon M
\hookrightarrow X \times \SB^d$ is algebraic, so is the unoriented
bordism class of the map $e_X \colon M \to X$, where $e_X$ is
the composite of $e$ and the canonical projection $X \times \SB^d \to
X$. It follows that the unoriented bordism class of the map $\varphi
\colon M \to X \times \R^{d-1}$ is algebraic. The maps $\varphi \colon M
\to X \times \R^{d-1}$ and $\bar{\varphi} \colon M \to X \times
\R^{d-1}$ represent the same unoriented bordism class. Consequently, the
unoriented bordism class of the inclusion map $\bar{M} \hookrightarrow X
\times \R^{d-1}$ is algebraic. This implies the assertion in view of
\cite[Theorem~F]{bib1}.

By the assertion, $M$ is isotopic to a nonsingular Zariski locally
closed subvariety $Z$ of $X \times \R^d$. Let $F \colon M \times [0,1]
\to X \times \R^d$ be a smooth isotopy such that $F_0$ is the inclusion
map and $F_1(M) = Z$, where $F_t(x) = F(x,t)$ for all $x \in M$ and $t
\in [0,1]$. Let $A$ be the Zariski closure of $Z$ in $X \times \R^d$.
Then $S \coloneqq A \setminus Z$ is a Zariski closed subvariety of $X
\times \R^d$ of dimension at most $\dim Z -1 = \dim M -1$. In
particular, $S$ has a finite stratification into smooth submanifolds of
$X \times \R^d$ of dimension at most $\dim S$. Since
\begin{equation*}
\dim (M \times [0, 1]) + \dim S \leq 2\dim M < \dim X + d,
\end{equation*}
according to the transversality theorem, there exists a smooth map $G
\colon M \times [0,1] \to X \times \R^d$, arbitrarily close to $F$ in
the $\C^{\infty}$ topology, such that
\begin{equation*}
G(M \times [0,1]) \subseteq (X \times \R^d) \setminus S
\quad\textrm{and}\quad G_1 = F_1.
\end{equation*}
Note that $G_1(M) = Z$ is a Zariski closed subvariety of $(X \times
\R^d) \setminus S$. If $G$ is close enough to~$F$, then~$G$ is an
isotopy. Consequently, the smooth submanifold $N \coloneqq G_0(M)$ is
isotopic to $X = G_1(M)$ in $(X \times \R^d) \setminus S$. The normal
bundle to $N$ in $(X \times \R^d) \setminus S$ is trivial and hence, in
view of Lemma~\ref{lem-2-4}, $N$ is $\varepsilon$-isotopic to a
nonsingular Zariski closed subvariety of $(X \times \R^d) \setminus S$.
Since the smooth embedding $G_0 \colon M \to X \times \R^d$ is close to
the inclusion map $F_0 \colon M \hookrightarrow X \times \R^d$, it
follows that $M$ is $\varepsilon$-isotopic to a nonsingular Zariski
locally closed subvariety of $X \times \R^d$, as required.
\end{proof}

\begin{proof}[Proof of Theorem~\ref{th-1-5}]
We can assume without loss of generality that the map $f$ is smooth. It
suffices to prove that \ref{a6} implies \ref{a1}. Suppose that \ref{a6}
holds.

Let $u$ be a point in $\SB^d$. Since $n+1 \leq p$, we have
\begin{equation*}
f(X \times \{u\}) \neq \SB^p.
\end{equation*}
By Sard's theorem, there exists a regular value $y \in \SB^p \setminus
f(X \times \{u\})$ of $f$. Then $M \coloneqq f^{-1}(y)$ is a compact
smooth submanifold of $X \times \SB^d$, embedded with trivial normal
bundle. Since $[M]^X = f^*(\bar{s}_p)$, it follows from
Lemma~\ref{lem-2-3} that the unoriented bordism class of the inclusion
map ${M \hookrightarrow X \times \SB^d}$ is algebraic. Note that
$\sigma(M) \subseteq \SB^d \setminus \{u\}$, where $\sigma \colon X
\times \SB^d \to \SB^d$ is the canonical projection. Furthermore, $\dim M
= n + d - p$, and hence
\begin{equation*}
2\dim M + 2 \leq n+d.
\end{equation*}
Consequently, according to Lemma~\ref{lem-2-5}, the submanifold $M$ is
$\varepsilon$-isotopic to a nonsingular Zariski locally closed
subvariety of $X \times \SB^d$. Thus, \ref{a1} holds in view of
Theorem~\ref{th-2-1}.
\end{proof}

In the remainder of this paper we will need several results on
stratified-algebraic vector bundles, all of which are proved in
\cite{bib19}.

Let $X$ be a real algebraic variety. By a \emph{stratification} of $X$
we mean a finite collection $\SC$ of pairwise disjoint Zariski locally
closed subvarieties whose union is $X$. Each subvariety in $\SC$ is
called a stratum of $\SC$. A map $f \colon X \to Y$, where $Y$ is a real
algebraic variety, is said to be \emph{stratified-regular} if it is
continuous and for some stratification $\SC$ of $X$, the restriction
$f|_S \colon S \to Y$ of $f$ to each stratum $S$ in $\SC$ is a regular
map, cf. \cite{bib19}. The notion of stratified-regular map is closely
related to those of hereditarily rational function \cite{bib12} and
fonction r\'egulue \cite{bib7}. One readily sees that each
stratified-regular map is continuous rational.

Let $\F$ stand for $\R$, $\CB$ or $\HB$ (the quaternions). All
$\F$-vector spaces will be left $\F$-vector spaces. When convenient,
$\F$ will be identified with $\R^{d(\F)}$, where $d(\F) = \dim_{\R} \F$.

For any nonnegative integer $n$, let $\varepsilon_X^n(\F)$ denote the
standard trivial $\F$-vector bundle on $X$ with total space $X \times
\F^n$, where $X \times \F^n$ is regarded as a real algebraic variety.

An algebraic $\F$-vector bundle on $X$ is an algebraic $\F$-vector
subbundle of $\varepsilon_X^n(\F)$ for some $n$ (cf. \cite[Chapters~12
and~13]{bib4} for various characterizations of algebraic $\F$-vector
bundles).

We now recall the fundamental notion introduced in \cite{bib19}. A
\emph{stratified-algebraic} $\F$-vector bundle on $X$ is a topological
$\F$-vector subbundle $\xi$ of $\varepsilon_X^n(\F)$, for some $n$, such
that for some stratification $\SC$ of $X$, the restriction $\xi|_S$ of
$\xi$ to each stratum $S$ in $\SC$ is an algebraic $\F$-vector subbundle
of $\varepsilon_S^n(\F)$.

A topological $\F$-vector bundle on $X$ is said to \emph{admit a
stratified-algebraic structure} if it is isomorphic to a
stratified-algebraic $\F$-vector bundle on $X$.

Let $\K$ be a subfield of $\F$, where $\K$ (as $\F$) stands for $\R$,
$\CB$ or $\HB$. Any $\F$-vector bundle $\xi$ on $X$ can be regarded as a
$\K$-vector bundle, which is indicated by $\xi_{\K}$. In particular,
$\xi_{\K} = \xi$ if $\K=\F$. If the $\F$-vector bundle $\xi$ admits a
stratified-algebraic structure, then so does the $\K$-vector
bundle~$\xi_{\K}$. The following result will play a crucial role.

\begin{theorem}[{\cite[Theorem~1.7]{bib19}}]\label{th-2-6}
Let $X$ be a compact real algebraic variety. A topological $\F$-vector
bundle $\xi$ on $X$ admits a stratified-algebraic structure if and only
if the $\K$-vector bundle~$\xi_{\K}$ admits a stratified-algebraic
structure.
\end{theorem}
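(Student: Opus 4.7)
The forward direction is immediate: any algebraic $\F$-subbundle of $\varepsilon_X^n(\F)$ is, after identifying $\F$ with $\K^{d(\F)/d(\K)}$, an algebraic $\K$-subbundle of $\varepsilon_X^{nd(\F)/d(\K)}(\K)$, and a stratification witnessing the stratified-algebraic structure of $\xi$ witnesses one for $\xi_\K$ as well. So I would focus entirely on the converse. By the obvious transitivity, it suffices to treat the two elementary cases $(\K,\F) = (\R,\CB)$ and $(\K,\F) = (\CB,\HB)$.

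The strategy is to encode the $\F$-structure on $\xi$ as extra stratified-algebraic endomorphisms of a stratified-algebraic model of $\xi_\K$. Suppose $\xi_\K$ admits a stratified-algebraic structure, realized by a stratified-algebraic $\K$-subbundle $\eta \subseteq \varepsilon_X^m(\K)$ together with a topological $\K$-isomorphism $\varphi \colon \xi_\K \to \eta$. Transporting scalar multiplication by $i \in \CB$ (respectively by $i, j \in \HB$) through $\varphi$ yields a topological $\K$-endomorphism $J$ of $\eta$ satisfying $J^2 = -I$ (respectively a pair $(I_0, J_0)$ satisfying the standard quaternionic relations). Any stratified-algebraic solution $J'$ of these relations on $\eta$ equips $(\eta, J')$ with a stratified-algebraic $\F$-vector bundle structure; if $J'$ is homotopic to $J$ through $\K$-endomorphisms satisfying the relation, then $(\eta, J')$ is topologically $\F$-isomorphic to $\xi$.

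Thus the plan reduces to producing a stratified-algebraic solution of the relevant algebraic relation in $\mathrm{End}_\K(\eta)$ that is sufficiently close to $J$. I would set up the fiberwise algebraic variety $\mathcal{J}(\eta) \subseteq \mathrm{End}_\K(\eta)$ cut out by the defining equations, which is a stratified-algebraic fiber bundle over $X$ whose fibers are nonsingular homogeneous real algebraic varieties (orbits of unitary/symplectic groups acting on almost complex or quaternionic structures), and seek a stratified-regular section close to $J$. Using Theorem~\ref{th-2-1}-type approximation philosophy together with the fact that on each stratum one can work with the Zariski closure and retract back to the algebraic fiber via an equivariant tubular neighborhood, an inductive argument over the strata (ordered by dimension) extends the approximation, at each step using compactness of $X$ and of the unitary group acting on the fibers.

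The main obstacle I expect is the stratum-by-stratum approximation and gluing step: given a stratified-algebraic approximation of $J$ on the closed union of lower-dimensional strata, one must extend it to the next stratum without destroying the relation $J^2 = -I$ (or its quaternionic analogue) and without losing continuity at the boundary. This is delicate because the target is a non-linear subvariety of $\mathrm{End}_\K(\eta)$; one needs an equivariant stratified-algebraic retraction from a tubular neighborhood of $\mathcal{J}(\eta)$ onto $\mathcal{J}(\eta)$, which should be constructible from the polar decomposition and from the fact that the orbit structure on the fibers of $\mathcal{J}(\eta)$ is algebraic. All other ingredients — transitivity in the chain $\R \subset \CB \subset \HB$, the reduction to endomorphisms, and the verification that closeness implies $\F$-isomorphism to $\xi$ — are standard once the approximation is in place.
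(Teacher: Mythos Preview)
The paper does not actually prove this theorem; it is quoted from \cite{bib19}, with only the remark that the case $\K=\R$ is ``rather involved'' and that the remaining cases follow from the identity $\xi_{\R}=(\xi_{\K})_{\R}$ together with the easy direction. Your transitivity reduction to the two elementary extensions $\R\subset\CB$ and $\CB\subset\HB$ is a legitimate alternative to the paper's reduction to $\K=\R$, but under either scheme the entire content of the theorem lies in the step you only sketch.

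For that step your plan---transport the extra scalar multiplication to a continuous section $J$ of the nonlinear subbundle $\mathcal{J}(\eta)=\{J^2=-I\}\subset\mathrm{End}_{\K}(\eta)$ and then approximate it stratum by stratum by a stratified-regular section---is a natural strategy, and you correctly isolate the crux: extending across strata while remaining inside the nonlinear locus. But the proposal has a real gap precisely there. You appeal to ``an equivariant stratified-algebraic retraction'' onto $\mathcal{J}(\eta)$ built ``from the polar decomposition'', yet polar decomposition is not a polynomial operation, and the orbit description under unitary groups does not by itself yield an algebraic (or even regular) retraction over~$\R$; producing such a retraction, or circumventing it, is exactly the nontrivial work the citation to \cite{bib19} is meant to cover, and your outline does not supply it. Two smaller points: the invocation of ``Theorem~\ref{th-2-1}-type approximation philosophy'' is misplaced, since that theorem concerns regular values of maps into spheres and has nothing to do with approximating sections of nonlinear fiber bundles; and in the case $(\K,\F)=(\CB,\HB)$, multiplication by $j$ is $\CB$-\emph{antilinear}, so it is not a $\K$-endomorphism of $\eta$ as you wrote---one must work with $\R$-linear maps or with conjugate-linear endomorphisms, which slightly complicates the fiberwise variety you are trying to hit. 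In sum, your outline points in a reasonable direction and names the right difficulty, but it does not close the gap that the paper defers to \cite{bib19} to fill.
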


The proof for $\K=\R$, rather involved, is given in \cite{bib19}. The
general case follows since $\xi_{\R} = (\xi_{\K})_{\R}$. In the present
paper we make use of this result with $\F=\HB$ and $\K=\CB$.

Another useful fact is the following.

\begin{theorem}[{\cite[Corollary~3.14]{bib19}}]\label{th-2-7}
Let $X$ be a compact real algebraic variety. A topological $\F$-vector
bundle on $X$ admits a stratified-algebraic structure if and only if it
is stably equivalent to a stratified-algebraic $\F$-vector bundle on
$X$.
\end{theorem}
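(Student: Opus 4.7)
The plan is to prove the non-trivial ``if'' direction; the converse is immediate because any bundle is stably equivalent to itself. Suppose $\xi$ is a topological $\F$-vector bundle on $X$ stably equivalent to a stratified-algebraic $\F$-vector bundle $\eta$, so that $\xi \oplus \varepsilon_X^a(\F) \cong \eta \oplus \varepsilon_X^b(\F)$ for some nonnegative integers $a, b$. Since trivial bundles are algebraic and direct sums of stratified-algebraic bundles are stratified-algebraic, the bundle $\zeta \coloneqq \eta \oplus \varepsilon_X^b(\F)$ is stratified-algebraic, and the problem reduces to the following key lemma: \emph{if $\xi \oplus \varepsilon_X^k(\F)$ is topologically isomorphic to a stratified-algebraic $\F$-vector bundle $\zeta$, then $\xi$ admits a stratified-algebraic structure}.

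To establish this lemma I would realize $\zeta$ concretely as a stratified-algebraic subbundle of some $\varepsilon_X^N(\F)$. Fix a topological identification $\xi \oplus \varepsilon_X^k(\F) \cong \zeta$; the trivial summand then corresponds to a rank $k$ topological subbundle $\tau \subseteq \zeta$ trivialized by $k$ pointwise linearly independent continuous sections $s_1, \ldots, s_k \colon X \to \zeta$. Viewing each $s_i$ as a continuous map $X \to \F^N$ landing in $\zeta$, Weierstrass-type approximation on the compact variety $X$ lets me approximate $s_i$ uniformly by a regular map $X \to \F^N$. Post-composing with the fibrewise orthogonal projection $\varepsilon_X^N(\F) \to \zeta$, which is stratified-regular because $\zeta$ is stratified-algebraic, yields stratified-regular sections $\tilde{s}_1, \ldots, \tilde{s}_k$ of $\zeta$ arbitrarily close to the original $s_i$.

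For sufficiently close approximations the $\tilde{s}_i$ remain pointwise linearly independent (an open condition), so they span a stratified-algebraic trivial subbundle $\tilde{\tau} \subseteq \zeta$ of rank $k$. Its fibrewise orthogonal complement $\tilde{\tau}^{\perp} \subseteq \zeta$ is again a stratified-algebraic $\F$-vector bundle. For close enough approximations the topological projection $\zeta \to \tilde{\tau}^{\perp}$ along $\tilde{\tau}$ restricts to a topological isomorphism from the original complement of $\tau$ in $\zeta$ onto $\tilde{\tau}^{\perp}$, thereby endowing $\xi$ with a stratified-algebraic structure. If convenient, Theorem~\ref{th-2-6} allows one to first reduce to the case $\F = \R$.

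The main obstacle lies in rigorously justifying the two approximation steps. The first, that continuous $\F$-valued functions on a compact real algebraic variety $X$ can be uniformly approximated by regular ones, reduces to the Stone--Weierstrass theorem after embedding $X$ in some $\R^n$ so that polynomial maps restrict to regular functions. The second, that the fibrewise orthogonal projection $\varepsilon_X^N(\F) \to \zeta$ is stratified-regular, requires showing that on each stratum $S$ of a stratification witnessing $\zeta$ as stratified-algebraic, the projection matrix onto the algebraic subbundle $\zeta|_S \subseteq \varepsilon_S^N(\F)$ has regular entries; this follows from the standard rational expressions for orthogonal projectors in terms of local algebraic frames of $\zeta|_S$. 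The most delicate point in the argument is the uniform control needed to guarantee that the approximated projection identifies $\tilde{\tau}^{\perp}$ with the topological complement of $\tau$, which depends on the compactness of $X$ to pass from pointwise openness of linear independence and isomorphism conditions to a single uniform choice of approximation.
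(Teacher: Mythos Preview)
The paper does not contain a proof of this statement; Theorem~\ref{th-2-7} is simply quoted from \cite[Corollary~3.14]{bib19} and used as a black box, so there is no in-paper argument to compare your proposal against.

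That said, your sketch is correct and is the standard route to such stability results (the purely algebraic analogue is in \cite[Chapter~12]{bib4}, and the stratified version in \cite{bib19} follows the same pattern). The reduction to the key lemma is right, and the mechanism---Weierstrass-approximate the $k$ trivializing sections, push them back into $\zeta$ via the stratified-regular orthogonal projector, then take the stratified-algebraic orthogonal complement of the resulting trivial subbundle---is exactly how one proves this. Two small comments. First, the regularity of the orthogonal projection onto an algebraic subbundle of $\varepsilon_S^N(\F)$ is a global fact (not merely local via frames): algebraic subbundles of trivial bundles are characterized by regular projection matrices, cf.\ \cite[Chapter~12]{bib4}. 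Second, your aside about reducing to $\F=\R$ via Theorem~\ref{th-2-6} should be dropped: in \cite{bib19} the proof of that theorem (Theorem~1.7 there) is considerably deeper and may itself invoke Corollary~3.14, so the reduction risks circularity---but your argument works uniformly in $\F$ without it.
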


There is a close connection between stratified-algebraic vector bundles
and approximation by stratified-regular maps with values in
Grassmannians. Let $\G_k(\F^n)$ denote the Grassmannian of
$k$-dimensional $\F$-vector subspaces of $\F^n$. We regard $\G_k(\F^n)$
as a real algebraic variety, cf.~\cite{bib4}. The tautological
$\F$-vector bundle $\gamma_k(\F^n)$ on $\G_k(\F^n)$ is algebraic.

\begin{theorem}[{\cite[Theorem~4.10]{bib19}}]\label{th-2-8}
Let $X$ be a compact real algebraic variety. For a continuous map $f
\colon X \to \G_k(\F^n)$, the following conditions are equivalent:
\begin{conditions}
\item\label{th-2-8-a} $f$ can be approximated by stratified-regular
maps.

\item\label{th-2-8-b} $f$ is homotopic to a stratified-regular map.

\item\label{th-2-8-c} The pullback $\F$-vector bundle
$f^*\gamma_k(\F^n)$ on $X$ admits a stratified-algebraic structure.
\end{conditions}
\end{theorem}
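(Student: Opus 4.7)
The strategy is to establish the cycle (\ref{th-2-8-a}) $\Rightarrow$ (\ref{th-2-8-b}) $\Rightarrow$ (\ref{th-2-8-c}) $\Rightarrow$ (\ref{th-2-8-a}), with essentially all of the work lying in the last implication. The implication (\ref{th-2-8-a}) $\Rightarrow$ (\ref{th-2-8-b}) is immediate, since any continuous map sufficiently close to $f$ in the compact-open topology is homotopic to it. For (\ref{th-2-8-b}) $\Rightarrow$ (\ref{th-2-8-c}), suppose $g \colon X \to \G_k(\F^n)$ is stratified-regular and homotopic to $f$. Then $f^*\gamma_k(\F^n)$ and $g^*\gamma_k(\F^n)$ are topologically isomorphic, and if $\SC$ is a stratification witnessing the stratified-regularity of $g$, then $g|_S$ is regular on every stratum $S \in \SC$ and therefore pulls the algebraic subbundle $\gamma_k(\F^n) \subseteq \varepsilon_{\G_k(\F^n)}^n(\F)$ back to an algebraic subbundle of $\varepsilon_S^n(\F)$; these assemble into a stratified-algebraic subbundle of $\varepsilon_X^n(\F)$ isomorphic to $f^*\gamma_k(\F^n)$.

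For (\ref{th-2-8-c}) $\Rightarrow$ (\ref{th-2-8-a}), set $\xi := f^*\gamma_k(\F^n)$ and fix a stratified-algebraic $\F$-vector bundle $\eta$ on $X$, realized as a subbundle of $\varepsilon_X^N(\F)$ for some $N$, together with a topological isomorphism $\eta \cong \xi$. The classifying map $h \colon X \to \G_k(\F^N)$ sending $x$ to the fibre $\eta_x$ is stratified-regular by construction, but lands in a Grassmannian that is in general larger than $\G_k(\F^n)$. The original map $f$, combined with the isomorphism $\eta \cong \xi$, furnishes a topological $\F$-linear embedding of $\eta$ into $\varepsilon_X^n(\F) \subseteq \varepsilon_X^N(\F)$, which after stabilizing by a suitable trivial summand becomes homotopic, through topological $\F$-linear monomorphisms, to the tautological embedding of $\eta$ realizing the stratified-algebraic structure, since both classifying maps represent stably equivalent bundles in the sense of Theorem~\ref{th-2-7}.

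The plan is to approximate this topological homotopy of monomorphisms by a stratified-algebraic one, producing a stratified-algebraic embedding of a bundle isomorphic to $\xi$ into $\varepsilon_X^n(\F)$ whose classifying map into $\G_k(\F^n)$ is close to $f$. I would proceed by induction over the strata of a common refinement of the stratifications at hand: at each stage a stratified-algebraic monomorphism defined on a closed union of strata is extended across one further stratum, using the algebraic structure of the Stiefel variety of $k$-frames in $\F^n$ to approximate the required compression of frames by an algebraic family on that stratum. The main obstacle is precisely this compatibility: the algebraic deformations built on individual strata must glue into a globally continuous, and ultimately stratified-algebraic, map close to $f$. This is where the full strength of the theory developed in \cite{bib19}, in particular Theorems~\ref{th-2-6} and~\ref{th-2-7}, becomes indispensable, and is what distinguishes this result from its classical analogue for regular maps into Grassmannians.
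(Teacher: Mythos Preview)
This theorem is not proved in the present paper at all: it is quoted from \cite[Theorem~4.10]{bib19} and used as a black box in the proofs of Theorems~\ref{th-1-7}--\ref{th-1-9}. There is therefore no proof here to compare your attempt against.

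On the substance of your attempt: the implications (\ref{th-2-8-a}) $\Rightarrow$ (\ref{th-2-8-b}) $\Rightarrow$ (\ref{th-2-8-c}) are handled correctly. For (\ref{th-2-8-c}) $\Rightarrow$ (\ref{th-2-8-a}), however, what you have written is a plan rather than a proof, and you say as much (``I would proceed by induction over the strata'', ``This is where the full strength of the theory developed in \cite{bib19}\ldots becomes indispensable''). The genuine obstacle, which you correctly locate, is the gluing step: on each stratum one must produce an algebraic map into $\G_k(\F^n)$ close to $f|_S$, and these must match up continuously across strata boundaries to yield a globally stratified-regular map. Nothing in your outline carries this out. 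Your appeal to Theorems~\ref{th-2-6} and~\ref{th-2-7} does not close the gap: Theorem~\ref{th-2-6} concerns change of scalar field and is irrelevant to compressing $\eta$ back into $\varepsilon_X^n(\F)$, and Theorem~\ref{th-2-7} only says that a bundle stably equivalent to a stratified-algebraic one is itself stratified-algebraic, which is information you already have by hypothesis. The approximation machinery that actually makes (\ref{th-2-8-c}) $\Rightarrow$ (\ref{th-2-8-a}) work is developed separately in \cite{bib19} and is not recoverable from the statements quoted in this paper.
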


As usual, the $k$th Chern class of a $\CB$-vector bundle $\xi$ will be
denoted by $c_k(\xi)$. Note that if $\eta$ is an $\HB$-vector bundle,
then
\begin{equation*}
c_{2l+1}(\eta_{\CB}) = 0
\end{equation*}
for every $l \geq 0$.\pagebreak

We now proceed to the investigation of maps with values in $\SB^4$.
Recall that the Grassmannian $\G_1(\HB^2)$ is biregularly isomorphic to
$\SB^4$. Henceforth we identify $\G_1(\HB^2)$ with $\SB^4$ and set
$\gamma = \gamma_1(\HB^2)$. In particular, $\gamma$ is an algebraic
$\HB$-line bundle on $\SB^4$ and the Chern class $c_2(\gamma_{\CB})$ is
a generator of the cohomology group $H^4(\SB^4; \Z)$. We can assume
without loss of generality that
\begin{equation*}
s_4 = c_2(\gamma_{\CB}).
\end{equation*}

\begin{lemma}\label{lem-2-9}
Let $X$ be a compact nonsingular real algebraic variety and let $u \in
H^4(X; \Z)$ be an adapted cohomology class. Then there exists a
stratified-algebraic $\HB$-line bundle $\xi$ on $X$ with $c_2(\xi_{\CB})
= u$.
\end{lemma}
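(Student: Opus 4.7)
The plan is to produce a stratified-regular map $f \colon X \to \SB^4$ whose pullback of $s_4$ is $u$, and then take $\xi := f^{*}\gamma$. Since $\gamma$ is an algebraic $\HB$-line bundle on $\SB^4$ and stratified-regular maps pull algebraic bundles back to stratified-algebraic bundles, this will furnish a stratified-algebraic $\HB$-line bundle with
\begin{equation*}
c_2(\xi_{\CB}) = f^{*}(c_2(\gamma_{\CB})) = f^{*}(s_4) = u,
\end{equation*}
as desired.

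First, since $u$ is adapted, unwind the definition to obtain a compact nonsingular codimension-$4$ Zariski locally closed subvariety $Z$ of $X$, with trivial oriented smooth normal bundle, such that $u = \llbracket Z \rrbracket^X$. Let $V$ be the Zariski closure of $Z$ in $X$, and put $S := V \setminus Z$; then $S$ is Zariski closed in $X$ and $Z$ is Zariski closed in the Zariski open subvariety $U := X \setminus S$. This yields the natural stratification $X = (X \setminus V) \sqcup Z \sqcup S$.

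Next, I would build $f$ by a Pontryagin--Thom type construction adapted to the stratified-regular category. The model to imitate is: send $S$ and everything far from $Z$ to the point at infinity of $\SB^4 \cong \HB \cup \{\infty\}$, send $Z$ to $0 \in \HB \subset \SB^4$, and interpolate on a tubular neighbourhood of $Z$ using the (smooth) trivialisation of its normal bundle. The key technical point is to replace the smooth tubular-neighbourhood data (which is not available algebraically, because the normal bundle is only topologically trivial) by algebraic data on each stratum while keeping the resulting map continuous on $X$; the stratum $S$ can be absorbed into the ``infinity'' fibre because it is Zariski closed in $V$ and does not interact with the collapse on $Z$. The verification that $f^{*}(s_4) = u$ is then a standard transversality/Pontryagin--Thom computation on a regular value, using the fact that $f^{-1}(0) = Z$ by construction.

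The hard part is exactly this stratified-regular realisation of the Pontryagin--Thom map, i.e.\ producing a continuous map on $X$ that is regular on each of $X \setminus V$, $Z$, and $S$, in spite of the fact that the normal bundle of $Z$ is only smoothly (not algebraically) trivial. If this direct route is awkward, an alternative is to first construct any topological $\HB$-line bundle $\xi_0$ on $X$ with $c_2((\xi_0)_{\CB}) = u$ (which exists by standard obstruction theory, since any class in $H^4(X;\Z)$ is realised by a map $X \to B Sp(1) = \HB P^{\infty}$), and then show that $\xi_0$ admits a stratified-algebraic structure: by Theorem~\ref{th-2-6} it suffices to do this for the complexification $(\xi_0)_{\CB}$, and by Theorem~\ref{th-2-7} it suffices to exhibit a stably equivalent stratified-algebraic $\CB$-vector bundle on $X$; the rank-$2$, $c_1 = 0$ constraint matches $SU(2) = Sp(1)$, so the passage between $\HB$-line bundles and such $\CB$-bundles is exact. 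Either route reduces the lemma to a construction on the adapted subvariety $Z$ and its complement, and the main obstacle remains the same: converting the smooth Pontryagin--Thom data on a tubular neighbourhood of $Z$ into genuinely stratified-regular data on $X$.
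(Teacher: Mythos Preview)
Your overall strategy---produce a stratified-regular map $g \colon X \to \SB^4$ with $g^*(s_4) = u$ and set $\xi = g^*\gamma$---is exactly what the paper does, and your computation of $c_2(\xi_{\CB})$ at the end is correct. But the paper sidesteps entirely the ``hard part'' you identify, and neither of your proposed routes through it actually closes.

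The paper's argument runs as follows. From the adapted subvariety $Z$ with a chosen framing $F$ one first builds only a \emph{smooth} map $f \colon X \to \SB^4$ by the ordinary Pontryagin--Thom construction, so that $(f^{-1}(y),F_f) = (Z,F)$ for some regular value $y$ and hence $f^*(s_4) = \llbracket Z\rrbracket^X = u$. No algebraic or stratified data enter this step. The upgrade to the stratified-regular category is then obtained from two prior results: since the regular fibre $f^{-1}(y)=Z$ is already a nonsingular Zariski locally closed subvariety of $X$, \cite[Theorem~2.4]{bib14} yields a \emph{continuous rational} map $g \colon X \to \SB^4$ homotopic to $f$; and because $X$ is nonsingular, every continuous rational map out of $X$ is automatically stratified-regular (\cite[Proposition~8]{bib12}, see \cite[Remark~2.3]{bib19}). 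Then $\xi \coloneqq g^*\gamma$ is stratified-algebraic and $c_2(\xi_{\CB}) = g^*(s_4) = f^*(s_4) = u$.

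Your direct stratified-regular Pontryagin--Thom map founders on precisely the obstacle you name: the normal bundle of $Z$ is only smoothly trivial, so there is no algebraic tubular-neighbourhood or trivialisation to feed into the collapse, and you give no device to manufacture one. Your alternative via obstruction theory is circular as written: after reducing via Theorems~\ref{th-2-6} and~\ref{th-2-7} you still owe an actual stratified-algebraic $\CB$-vector bundle stably equivalent to $(\xi_0)_{\CB}$, and none is produced. The ingredient you are missing in both cases is \cite[Theorem~2.4]{bib14}, which converts the smooth Pontryagin--Thom map into a continuous rational one using only the fact that its regular fibre is already a subvariety; the passage ``continuous rational $\Rightarrow$ stratified-regular'' on a nonsingular source then comes for free.
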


\begin{proof}
The cohomology class $u$ is of the form
\begin{equation*}
u = \llbracket Z \rrbracket^X,
\end{equation*}
where $Z$ is a nonsingular codimension $4$ Zariski locally closed
subvariety of $X$, which is a compact smooth submanifold with trivial
normal bundle that is suitably oriented. Choose a smooth framing $F$ of
the normal bundle to $Z$ in $X$ so that it determines the existing
orientation. According to a classical result in framed cobordism, we can
find a smooth map $f \colon X \to \SB^4$ and a regular value $y \in
\SB^4$ of $f$ such that the framed submanifolds $(Z, F)$ and
$(f^{-1}(y), F_f)$ are equal, where $F_f$ is a framing of $f^{-1}(y)$ induced by
$f$, cf. \cite[p.~44]{bib20}. Then
\begin{equation*}
f^*(s_4) = \llbracket Z \rrbracket^X = u.
\end{equation*}
Furthermore, according to \cite[Theorem~2.4]{bib14}, $f$ is homotopic to a
continuous rational map ${g \colon X \to \SB^4}$. In particular,
\begin{equation*}
g^*(s_4) = f^*(s_4) = u.
\end{equation*}
Since the variety $X$ is nonsingular, the map $g$ is stratified-regular
(this follows from \cite[Proposition~8]{bib12} as commented in
\cite[Remark~2.3]{bib19}). The pullback $\HB$-line bundle
\begin{equation*}
\xi \coloneqq g^*\gamma
\end{equation*}
on $X$ is stratified-regular, the map $g$ being stratified-regular and
the $\HB$-line bundle $\gamma$ on $\SB^4$ being algebraic. Furthermore,
\begin{equation*}
c_2(\xi_{\CB}) = c_2(g^*\gamma_{\CB}) = g^*(c_2(\gamma_{\CB})) = g^*(s_4)
= u,
\end{equation*}
as required.
\end{proof}

\begin{proof}[Proof of Theorem~\ref{th-1-7}]
According to Theorem~\ref{th-2-8}, it suffices to prove that the
pullback $\HB$-line bundle $\eta \coloneqq f^* \gamma$ on $X$ admits a
stratified-algebraic structure. The Chern class
\begin{equation*}
c_2(\eta_{\CB}) = c_2(f^*\gamma_{\CB}) = f^*(c_2(\gamma_{\CB})) =
f^*(s_4)
\end{equation*}
is an adapted cohomology class in $H^4(X; \Z)$, and hence, by
Lemma~\ref{lem-2-9}, there exists a stratified-algebraic $\HB$-line
bundle $\xi$ on $X$ with
\begin{equation*}
c_2(\xi_{\CB}) = c_2(\eta_{\CB}).
\end{equation*}
Since $c_j(\xi_{\CB}) = c_j(\eta_{\CB}) = 0$ for $j=1$ and $j \geq 3$,
we get
\begin{equation*}
c_k(\xi_{\CB}) = c_k(\eta_{\CB}) \quad\textrm{for all } k \geq 0.
\end{equation*}
Hence, according to \cite[Theorem~3.2]{bib23}, the $\CB$-vector bundles
$\xi_{\CB}$ and $\eta_{\CB}$ are stably equivalent (here the assumption
on the torsion of the cohomology groups $H^{2k}(X; \Z)$ is needed).
Consequently, in view of Theorem~\ref{th-2-7}, the $\CB$-vector bundle
$\eta_{\CB}$ admits a stratified-algebraic structure. Finally, by
Theorem~\ref{th-2-6}, the $\HB$-line bundle $\eta$ admits a
stratified-algebraic structure, as required.
\end{proof}

\begin{proof}[Proof of Theorem~\ref{th-1-8}]
According to Theorem~\ref{th-2-8}, it suffices to prove that the
pullback $\HB$-line bundle $\eta \coloneqq f^*\gamma$ on $X$ admits a
stratified-algebraic structure. We modify the proof of
Theorem~\ref{th-1-7} as follows.

The Chern class
\begin{equation*}
c_2(\eta_{\CB}) = f^*(s_4)
\end{equation*}
belongs to $A^4(X; \Z)$. Note that if a cohomology class $u$ in $H^4(X;
\Z)$ is adapted, then so is $-u$. Consequently, each element in $A^4(X;
\Z)$ can be written as a finite sum of (not necessarily distinct)
adapted cohomology classes. In particular,
\begin{equation*}
c_2(\eta_{\CB}) = u_1 + \cdots + u_k,
\end{equation*}
where the $u_i$ are adapted cohomology classes in $H^4(X; \Z)$. By
Lemma~\ref{lem-2-9}, there exists a stratified-algebraic $\HB$-line
bundle $\xi_i$ on $X$ with
\begin{equation*}
c_2((\xi_i)_{\CB}) = u_i.
\end{equation*}
Since $\dim X \leq 7$, the stratified-algebraic $\HB$-vector bundle
$\theta \coloneqq \xi_1 \oplus \cdots \oplus \xi_k$ can be written as
\begin{equation*}
\theta = \xi \oplus \varepsilon,
\end{equation*}
where $\xi$ is a topological $\HB$-line bundle and $\varepsilon$ is a
trivial $\HB$-vector bundle, cf. \cite[p.~99]{bib10}. According to
Theorem~\ref{th-2-7}, the $\HB$-line bundle $\xi$ admits a
stratified-algebraic structure. Furthermore,
\begin{equation*}
c_2(\xi_{\CB}) = c_2(\theta_{\CB}) = c_2((\xi_1)_{\CB}) + \cdots +
c_2((\xi_k)_{\CB}) = c_2(\eta_{\CB}).
\end{equation*}
Since $c_j(\xi_{\CB}) = c_j(\eta_{\CB}) = 0$ for $j=1$ and $j \geq 3$,
we get
\begin{equation*}
c_k(\xi_{\CB}) = c_k(\eta_{\CB}) \quad\textrm{for all } k \geq 0.
\end{equation*}
The rest of the proof is the same as that of Theorem~\ref{th-1-7}.
\end{proof}

We record the following general fact.

\begin{lemma}\label{lem-2-10}
Let $X$ be a compact nonsingular real algebraic variety and let $v$ be a
spherical cohomology class in $H^p(X; \Z)$, where $p \geq 1$. Then the
cohomology class $2v$ is adapted. In particular,
\begin{equation*}
2 \Hsph^p (X;\Z) \subseteq A^p (X;\Z).
\end{equation*}
\end{lemma}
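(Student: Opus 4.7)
The strategy is to represent $2v$ as the cohomology class of a single Zariski locally closed subvariety by doubling the Pontryagin--Thom representative of $v$ and then applying \cite[Theorem~F]{bib1}. First, I choose a smooth map $f \colon X \to \SB^p$ with $f^*(s_p) = v$ and, by Sard's theorem, a regular value $y \in \SB^p$ of $f$. The preimage $M \coloneqq f^{-1}(y)$ is then a compact smooth codimension-$p$ submanifold of $X$ with trivial oriented normal bundle, and $\llbracket M \rrbracket^X = v$.

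Next I double $M$. Using a trivialization of the normal bundle and a small nonzero constant framing vector, push $M$ off itself to a parallel disjoint copy $M'$. Then $N \coloneqq M \sqcup M'$ is a compact smooth codimension-$p$ submanifold of $X$ with trivial oriented normal bundle satisfying $\llbracket N \rrbracket^X = 2v$. Since the unoriented bordism group $\NF_*(X)$ is a $\Z/2$-vector space, the bordism class of the inclusion $N \hookrightarrow X$ equals $2[M \hookrightarrow X] = 0$, which is trivially algebraic.

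Now I apply \cite[Theorem~F]{bib1}, recalled in Section~\ref{sec-1}: the submanifold $N \times \{0\}$ of $X \times \R$ is $\varepsilon$-isotopic to a nonsingular Zariski locally closed subvariety $W$ of $X \times \R$. Let $\pi \colon X \times \R \to X$ be the canonical projection. Since the restriction of $\pi$ to $N \times \{0\}$ is just the inclusion $N \hookrightarrow X$, for $\varepsilon$ sufficiently small the tangent spaces of $W$ remain transverse to the $\R$-factor, so $\pi|_W$ is a smooth embedding of the compact manifold $W$ into $X$. Because $\pi|_W$ is an injective regular map between nonsingular real algebraic varieties with everywhere injective differential, standard real algebraic arguments identify it with a biregular isomorphism onto its image $Z \coloneqq \pi(W)$, which is a nonsingular Zariski locally closed subvariety of $X$. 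This $Z$ is smoothly isotopic to $N$ in $X$, so it is compact with trivial oriented normal bundle, and $\llbracket Z \rrbracket^X = \llbracket N \rrbracket^X = 2v$. Replacing $Z$ by the nonsingular locus of its Zariski closure if necessary, we conclude that $2v$ is adapted. The inclusion $2 \Hsph^p(X; \Z) \subseteq A^p(X; \Z)$ then follows, since every element of $\Hsph^p(X; \Z)$ is a $\Z$-linear combination of spherical classes and the negative of an adapted class is adapted (reverse the orientation of the normal bundle).

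The main obstacle is the descent from the algebraic subvariety $W \subset X \times \R$ produced by Theorem~F to a Zariski locally closed subvariety of $X$: images of Zariski locally closed sets under regular maps are merely constructible in general, so one must genuinely exploit the fact that $\pi|_W$ is an injective smooth embedding of a compact variety with nowhere vanishing Jacobian in order to realize $Z = \pi(W)$ biregularly as $W$ and thereby place it inside the Zariski topology of $X$. Once this promotion is justified, the remainder of the argument reduces to bookkeeping of orientations and cohomology classes.
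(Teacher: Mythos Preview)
Your argument has a genuine gap at the projection step. You produce a nonsingular Zariski locally closed subvariety $W \subset X \times \R$ that is $\varepsilon$-close to $N \times \{0\}$ and then assert that $Z = \pi(W)$ is a nonsingular Zariski locally closed subvariety of $X$ because $\pi|_W$ is an injective regular immersion of a compact variety. This is not a ``standard real algebraic argument'': the image of such a map is a priori only semialgebraic, and there is no general mechanism forcing it to be Zariski open in its Zariski closure, nor forcing the nonsingular locus of that closure to coincide with $Z$ (your fallback ``replace $Z$ by the nonsingular locus of its Zariski closure'' may enlarge $Z$ to a possibly non-compact set carrying a different cohomology class). In fact, if this descent worked it would upgrade \cite[Theorem~F]{bib1} to the Approximation Conjecture, since Theorem~F already supplies such a $W \subset V \times \R$ whenever the bordism class of $M \hookrightarrow V$ is algebraic, and for $\varepsilon$ small the projection $\pi|_W$ is always an injective immersion; the paper explicitly presents that passage from $V \times \R$ to $V$ as open. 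Your final paragraph acknowledges that this is ``the main obstacle'' but does not resolve it.

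The paper's proof avoids the detour through $X \times \R$ altogether. It exploits more than the vanishing of the bordism class of $N = M \cup M'$ in $\NF_*(X)$: because $M'$ is a small push-off of $M$, the union $M \cup M'$ is the boundary of an embedded cylinder $M \times [0,1]$ lying inside $X$ itself, with trivial normal bundle. This geometric bounding \emph{in $X$} is precisely the input to \cite[Lemma~2.3]{bib13}, which then directly furnishes a nonsingular Zariski closed subvariety $V \subset X$ isotopic to $M \cup M'$, whence $\llbracket V \rrbracket^X = 2v$ for a suitable orientation of the normal bundle. The point is that by passing to abstract bordism you discarded the extra structure carried by the double and were then forced to confront the open projection problem to recover what you had thrown away.
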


\begin{proof}
It suffices to prove the first assertion. Recall that $v = \llbracket M
\rrbracket^X$, where $M$ is a compact smooth codimension $p$ submanifold
of $X$, with trivial and oriented normal bundle. There exists an
isotopic copy $M'$ of $M$ such that $M \cap M' = \varnothing$ and the
union $M \cup M'$ is the boundary of a compact smooth submanifold with
boundary, embedded in $X$ with trivial normal bundle. It follows that
the smooth submanifold $M \cup M'$ is isotopic to a nonsingular Zariski
closed subvariety~$V$ of $X$ (cf. for example \cite[Lemma~2.3]{bib13}).
By construction,
\begin{equation*}
2v = 2 \llbracket M \rrbracket^X = \llbracket V \rrbracket^X,
\end{equation*}
provided that the normal bundle to $V$ in $X$ is suitably oriented.
Hence the cohomology class $2v$ is adapted, as required.
\end{proof}

\begin{proof}[Proof of Theorem~\ref{th-1-9}]
Since
\begin{equation*}
\rho(A^4(X;\Z)) = A^4(X;\Z/2) \quad\textrm{and}\quad \rho(f^*(s_4)) =
f^*(\bar{s}_4) \in A^4(X;\Z/2),
\end{equation*}
by the universal coefficient theorem, the cohomology class $f^*(s_4)$
can be written as
\begin{equation*}
f^*(s_4) = u + 2v,
\end{equation*}
where $u \in A^4(X;\Z)$ and $v \in H^4(X;\Z)$. Making use of the
equality $\Hsph^4(X; \Z) = H^4(X; \Z)$ and Lemma~\ref{lem-2-10}, we get
$f^*(s_4) \in A^4(X; \Z)$. The proof is complete in view of
Theorem~\ref{th-1-8}.
\end{proof}

\cleardoublepage
\phantomsection
\addcontentsline{toc}{section}{\refname}


\begin{thebibliography}{99}
\bibitem{bib1} S.~Akbulut and H.~King, On approximating submanifolds by
algebraic sets and a solution to the Nash conjecture, Invent. Math. 107
(1992), 87--98.

\bibitem{bib2} S.~Akbulut and H.~King, Topology of Real Algebraic Sets,
Math. Sci. Research Institute Publ., vol.~25, Springer, 1992.

\bibitem{bib3} M.~Bilski, W.~Kucharz, A.~Valette, and G.~Valette, Vector
bundles and regulous maps, Math.~Z. 275 (2013), 403--418.

\bibitem{bib4} J.~Bochnak, M.~Coste, and M.-F.~Roy, Real Algebraic
Geometry, Ergeb. der Math. und ihrer Grenzgeb. Folge 3, vol.~36,
Springer, 1998.

\bibitem{bib5} J.~Bochnak and W.~Kucharz, Algebraic approximation of
mappings into spheres, Michigan Math. J. 34 (1987), 119--125.

\bibitem{bib6} P.E.~Conner, Differentiable Periodic Maps, 2nd ed.,
Lecture Notes in Math. 738, Springer, 1979.

\bibitem{bib7} G.~Fichou, J.~Huisman, F.~Mangolte, and J.-Ph.~Monnier,
Fonctions r\'egulues, arXiv:1112.3800 [math.AG], to appear in J.~Reine
Angew. Math.

\bibitem{bibstar} G.~Fichou, J.-Ph.~Monnier, amd R.~Quarez, Continuous
functions in the plane regular after one blowing-up, arXiv:1409.8223v2
[math.AG].

\bibitem{bib8} H.~Hironaka, Resolution of singularities of an algebraic
variety over a field of characteristic zero, Ann. of Math. 79 (1964),
109--326.

\bibitem{bib9} M.~Hirsch, Differential Topology, Springer, 1997.

\bibitem{bib10} D.~Husemoller, Fibre Bundles, Springer, 1975.

\bibitem{bib11} J.~Koll\'ar, Lectures on Resolution of Singularities,
Ann. of Math. Studies, vol.~166, Princeton University Press,
Princeton, NJ, 2007.

\bibitem{bib12} J.~Koll\'ar and K.~Nowak, Continuous rational functions
on real and $p$-adic varieties, Math.~Z. 279 (2015), 85--97.

\bibitem{bib13} W.~Kucharz, On homology of real algebraic varieties,
Invent. Math. 82 (1985), 19--25.

\bibitem{bib14} W.~Kucharz, Rational maps in real algebraic geometry,
Adv. Geom. 9 (2009), 517--539.

\bibitem{bib15} W.~Kucharz, Regular versus continuous rational maps,
Topology Appl. 160 (2013), 1375--1378.

\bibitem{bib16} W.~Kucharz, Approximation by continuous rational maps
into spheres, J.~Eur. Math. Soc. 16 (2014), 1555--1569.

\bibitem{bib17} W.~Kucharz, Continuous rational maps into the unit
$2$-sphere, Arch. Math.(Basel) 102 (2014), 257--261.

\bibitem{bib18} W.~Kucharz, Continuous rational maps into spheres,
arxiv:1403.5127 [math.AG].

\bibitem{bib19} W.~Kucharz and K.~Kurdyka, Stratified-algebraic vector
bundles, arXiv:1308.4376 [math.AG], to appear in J.~Reine Angew. Math.

\bibitem{bib20} J.W.~Milnor, Topology from the Differentiable Viewpoint,
The University Press of Virginia, Charlottesville, Va. 1965.

\bibitem{bib21} J.W.~Milnor and J.D.~Stasheff, Characteristic Classes,
Princeton University Press, Princeton, NJ, 1974.

\bibitem{bib22} J.~Nash, Real algebraic manifolds, Ann. of Math. 56
(1952), 405--421.

\bibitem{bib23} F.P.~Peterson, Some remarks on Chern classes, Ann. of
Math. 69 (1959), 414--420.

\bibitem{bib24} A.~Tognoli, Su una congettura di Nash, Ann. Sc. Norm.
Sup. Pisa Sci. Fis. Mat. 3 (27) (1973), 167--185.

\bibitem{bib25} H.~Whitney, Differentiable manifolds, Ann. of Math. 37
(1936), 645--680.
\end{thebibliography}
\end{document}